\def\({\left(}
\def\){\right)}
\def\RR{{\mathbb{R}}}
\def\CC{{\mathbb{C}}}
\newcommand{\be}{\begin{equation}}
\newcommand{\ee}{\end{equation}}
\newcommand{\bea}{\begin{eqnarray}}
\newcommand{\eea}{\end{eqnarray}}
\newcommand{\beann}{\begin{eqnarray*}}
\newcommand{\eeann}{\end{eqnarray*}}
 \newtheorem{theorem}{Theorem}
\newtheorem{remark}{Remark}
\newtheorem{lemma}{Lemma}[section]
\newenvironment{proof1}{
    \noindent {\em Proof }}{\hfill$\Box$}
\newcommand{\bn}{\begin{eqnarray*}}       
\newcommand{\en}{\end{eqnarray*}}
\newcommand{\beq}{\begin{equation}}
\newcommand{\eeq}{\end{equation}}
\newcommand{\R}{\mathbb{R}}
\numberwithin{equation}{section}
\begin{document}

\title[Black soliton in a regularized NLS equation]{\bf Dynamics of the black soliton \\ in a regularized nonlinear Schr\"{o}dinger equation}

\author{Dmitry E. Pelinovsky}
\address[D.E. Pelinovsky]{Department of Mathematics and Statistics, McMaster University,	Hamilton, Ontario, Canada, L8S 4K1}
\email{dmpeli@math.mcmaster.ca}

\author{Michael Plum}
\address[M. Plum]{Institut f\"{u}r Analysis, Karlsruher Institut f\"{u}r Technologie, Karlsruhe, Germany, 76131}
\email{michael.plum@kit.edu}

\begin{abstract}
	We consider a family of regularized defocusing nonlinear Schr\"{o}dinger (NLS) equations proposed in the context of the cubic NLS equation with a bounded dispersion relation. The time evolution is well-posed if the black soliton is perturbed by a small perturbation in the Sobolev space $H^s(\R)$ with $s > \frac{1}{2}$. We prove that the black soliton is spectrally stable (unstable) if the regularization parameter is below (above) some explicitly specified threshold. We illustrate the stable and unstable dynamics of the perturbed black solitons by using the numerical finite-difference method. The question of orbital stability of the black soliton is left open due to the mismatch of the function spaces for the energy and  momentum conservation.
\end{abstract}

\date{\today}
\maketitle


\section{Introduction}
\label{sec-1}

Dark solitons are the depression waves propagating steadily along the continuous wave background. The name is drawn from the realms of nonlinear optics, where the continuous wave background supports the light of constant intensity and the dark solitons reduce the light intensity during transmission \cite{Kivshar,China}. The most extreme case in the family of dark solitons is the black soliton, for which the light intensity drops to zero and the wave is spatially localized with zero speed. Such standing waves are common in many models of nonlinear optics and Bose--Einstein condensation in one, two,  and three spatial dimensions \cite{Kev-book}.

The canonical model for the dark and black solitons is the defocusing nonlinear Schr\"{o}diger (NLS) equation 
\begin{equation}
\label{nls-canon}
i \psi_t + \psi_{xx} - 2 |\psi|^2 \psi = 0,
\end{equation}
where $\psi(t,x) : \RR \times \RR \to \CC$. The exact traveling wave solutions for the dark solitons are given by 
\begin{equation}
\label{dark-cubic}
\psi(t,x) = \left[ \gamma \tanh(\gamma(x-2ct)) + i c \right] e^{-2it}, \quad \gamma := \sqrt{1-c^2},
\end{equation}
where $c \in (-1,1)$ is a free parameter for the half wave speed. 
For $c = 0$, the dark soliton $\psi(t,x) = \tanh(x) e^{-2it}$ is referred as the black soliton. 

Compared to the canonical NLS equation (\ref{nls-canon}), we address the following family of regularlized defocusing NLS equations 
\begin{equation}
\label{nls-ext}
i (1 - \mu^2 \partial_x^2) \psi_t + \psi_{xx} - 2 |\psi|^2 \psi = 0,
\end{equation}
where $\psi(t,x) : \RR \times \RR \to \CC$ and $\mu > 0$ is a small parameter. 
The model was proposed in the context of nonlinear optics for ultra-short pulses \cite{ColinLannes,Lannes-review} (see \cite[Section 4.1.4, eq. (72)]{Lannes}) as an example of the NLS equation with a bounded dispersion relation. The linear part of the model has the dispersion relation 
$$
\omega(k) = \frac{k^2}{1 + \mu^2 k^2}, \quad k \in \R,
$$
obtained for the Fourier modes $u(t,x) \sim e^{i k x - i \omega(k) t}$. 
The additional term $- i \mu^2 \psi_{txx}$ in (\ref{nls-ext}) compared to the canonical model (\ref{nls-canon}) leads to the bounded dispersion relation 
in the interval $[0,\mu^{-2}]$ for the frequencies $\omega$. This kind of regularization of the dispersion relation is popular in models of fluid dynamics where 
the canonical Korteweg--de Vries equation is replaced by a regularized Benjamin--Bona--Mahony equation \cite{BBM}.

We are mainly interested in the stability of the black soliton which is the  
standing wave solution of the defocusing NLS equation (\ref{nls-ext}) with nonzero boundary conditions. If the black soliton is stable in the time 
evolution, as is known for the canonical NLS equation (\ref{nls-canon}) 
\cite{BGSS,Chiron,GPII,PZ,GS,GalloMenza}, then it plays an important role in 
nonlinear optics as a carrier of information inside the spatially modulated periodic waves \cite{Maiden,Sprenger} (see review in \cite{Hoefer}).

In order to study the black soliton of the model (\ref{nls-ext}), we normalize the boundary conditions to unity without  loss of generality and consider solutions satisfying
$$
|\psi(t,x)| \to 1 \quad \mbox{\rm as} \;\; |x| \to \infty.
$$
With the transformation 
\begin{equation}
\label{transform}
\psi(t,x) = e^{-2it} u(t,\xi), \quad \xi = \frac{x}{\sqrt{1-2\mu^2}},
\end{equation}
the NLS equation (\ref{nls-ext}) can be rewritten in the equivalent form as 
\begin{equation}
\label{nls}
i (1 - \epsilon^2 \partial_{\xi}^2) u_t + u_{\xi \xi} + 2 (1- |u|^2) u = 0,
\end{equation}
where 
$$
\epsilon := \frac{\mu}{\sqrt{1-2\mu^2}}.
$$ 
The mapping $\mu \to \epsilon$ is monotonically increasing for $\mu \in (0,\frac{1}{\sqrt{2}})$ with $\epsilon \to \infty$ as $\mu \to \frac{1}{\sqrt{2}}$.

The black soliton is the steady-state (time-independent) solution
of the transformed NLS equation (\ref{nls}). It is available 
in the explicit form $u(t,\xi) = \varphi(\xi) := \tanh(\xi)$, which coincides with the black soliton of the defocusing NLS equation (\ref{nls-canon}) 
given by (\ref{dark-cubic}) for $c = 0$.

Local well-posedness of the regularized model (\ref{nls}) can be studied 
in the space of bounded smooth functions with nonzero boundary conditions at infinity. One general method is to consider a decomposition 
$u(t,\xi) =  \varphi(\xi) + v(t,\xi)$, where the perturbation 
$v(t,\cdot)$ is a continuous function of $t$ 
in Sobolev spaces with respect to the spatial coordinate \cite{Gallo,Gerard,Zhid}.  This is our first result presented in the following theorem. 

\begin{theorem}
	\label{thr-local-well}
	For every $v_0 \in H^s(\mathbb{R})$ with $s > \frac{1}{2}$, there exists the maximal existence time $\tau_0 \in (0,\infty]$ and a unique solution of the NLS equation (\ref{nls}) in the form $u = \varphi + v$, where 
	$\varphi(\xi) = \tanh(\xi)$ and $v \in C^1([0,\tau_0),H^s(\mathbb{R}))$ such that $v(0,\cdot) = v_0$. Moreover, $\tau_0 \in (0,\infty]$ and $v \in C^1([0,\tau_0),H^s(\mathbb{R}))$ depend continuously on $v_0 \in H^s(\mathbb{R})$.
\end{theorem}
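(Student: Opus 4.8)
The plan is to exploit the regularizing operator $(1-\epsilon^2\partial_\xi^2)$ in order to recast the problem as an abstract ordinary differential equation in the Banach space $H^s(\R)$ and then to invoke the Picard--Lindel\"of theorem. First I would substitute the decomposition $u = \varphi + v$ into (\ref{nls}). Since $\varphi = \tanh$ solves the stationary equation $\varphi_{\xi\xi} + 2(1-\varphi^2)\varphi = 0$, the terms independent of $v$ cancel, and using $|\varphi+v|^2 = \varphi^2 + 2\varphi\,\mathrm{Re}(v) + |v|^2$ the equation for the perturbation becomes
\be
i(1-\epsilon^2\partial_\xi^2) v_t = -v_{\xi\xi} - 2(1-\varphi^2)v + 4\varphi^2\,\mathrm{Re}(v) + N(v),
\ee
where $N(v) := 4\varphi\,\mathrm{Re}(v)\,v + 2\varphi|v|^2 + 2|v|^2 v$ collects the quadratic and cubic terms. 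Inverting the elliptic operator, I would rewrite this as
\be
v_t = F(v) := -i(1-\epsilon^2\partial_\xi^2)^{-1}\left[ -v_{\xi\xi} - 2(1-\varphi^2)v + 4\varphi^2\,\mathrm{Re}(v) + N(v) \right].
\ee

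The crucial observation is that $(1-\epsilon^2\partial_\xi^2)^{-1}$ acts in Fourier space as multiplication by $(1+\epsilon^2 k^2)^{-1}$ and hence gains two derivatives: it maps $H^{s-2}(\R)$ boundedly into $H^s(\R)$. In particular the worst term, $(1-\epsilon^2\partial_\xi^2)^{-1}v_{\xi\xi}$, has the bounded Fourier multiplier $-k^2(1+\epsilon^2 k^2)^{-1}$ and therefore defines a bounded linear operator from $H^s(\R)$ into itself with no loss of derivatives. This is precisely the mechanism by which the regularization removes the derivative loss inherent in the unregularized equation (\ref{nls-canon}) and reduces (\ref{nls}) to a semilinear ODE whose right-hand side $F$ maps $H^s(\R)$ into $H^s(\R)$.

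Next I would establish that $F$ is locally Lipschitz on $H^s(\R)$. The coefficients $\varphi = \tanh$, $\varphi^2$, and $1-\varphi^2 = \mathrm{sech}^2$ are smooth with all derivatives bounded, so multiplication by them is a bounded operator on $H^s(\R)$; together with the two-derivative smoothing this shows the linear part of $F$ is a bounded linear operator on $H^s(\R)$. For the nonlinear part $N(v)$ I would use that $H^s(\R)$ is a Banach algebra for $s>\tfrac12$, i.e. $\norm{fg}_{H^s} \le C\norm{f}_{H^s}\norm{g}_{H^s}$; this controls the products $\mathrm{Re}(v)\,v$, $|v|^2$, and $|v|^2 v$ and yields, for $v, w$ in a ball of radius $R$, an estimate of the form $\norm{N(v)-N(w)}_{H^s} \le C(R)\norm{v-w}_{H^s}$. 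The requirement $s>\tfrac12$ enters here, and only here, through the algebra property. Composing with the bounded operator $(1-\epsilon^2\partial_\xi^2)^{-1}$ then gives the local Lipschitz bound for $F$.

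With $F: H^s(\R) \to H^s(\R)$ locally Lipschitz, the conclusion follows from the Cauchy--Lipschitz (Picard--Lindel\"of) theorem for ODEs in a Banach space applied to $v_t = F(v)$, $v(0)=v_0$: there is a maximal existence time $\tau_0 \in (0,\infty]$ and a unique solution $v \in C^0([0,\tau_0),H^s(\R))$ of the associated integral equation $v(t) = v_0 + \int_0^t F(v(s))\,ds$, depending continuously (indeed Lipschitz-continuously on compact subintervals) on $v_0$, with $\tau_0$ lower semicontinuous in $v_0$. Since $F$ is continuous, the identity $v_t = F(v) \in C^0([0,\tau_0),H^s(\R))$ upgrades the solution to $v \in C^1([0,\tau_0),H^s(\R))$. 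I expect the main technical obstacle to be the careful verification of the multiplier and algebra estimates at fractional regularity $s$, in particular confirming that multiplication by the non-decaying background coefficient $\tanh$ is bounded on $H^s(\R)$ and that the smoothing operator exactly compensates the two spatial derivatives, rather than the abstract ODE step, which is then routine.
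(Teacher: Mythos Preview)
Your proposal is correct and follows essentially the same route as the paper: both rewrite the equation for $v$ as an ODE $v_t=F(v)$ in $H^s(\R)$ by using that $(1-\epsilon^2\partial_\xi^2)^{-1}$ gains two derivatives, then invoke the Banach-algebra property of $H^s(\R)$ for $s>\tfrac12$ and the contraction mapping (Picard--Lindel\"of) principle to obtain local existence, $C^1$ regularity, maximal continuation, and continuous dependence. Your write-up is in fact more explicit than the paper's on the points it glosses over, namely the boundedness of multiplication by the non-$L^2$ coefficient $\tanh$ on $H^s(\R)$ and the precise mechanism by which the smoothing compensates $v_{\xi\xi}$.
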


Replacing the solution $u$ of Theorem \ref{thr-local-well} with $\varphi +v$, where $\varphi(\xi) = \tanh(\xi)$ is the black soliton and $v := U + i V$ is a small perturbation allows us to reformulate the stability of the black soliton at the level of linearized approximation. Separation of the real part $U$ and the imaginary part $V$ gives the following linearized equations:
\begin{equation}
\label{nls-lin}
(1 - \epsilon^2 \partial_{\xi}^2) U_t = L_- V, \qquad 
(1 - \epsilon^2 \partial_{\xi}^2) V_t = -L_+ U,
\end{equation}
where the linear operators $L_{\pm}$ in $L^2(\R)$ with ${\rm Dom}(L_{\pm}) \subset L^2(\R)$ are defined by the differential expressions 
\begin{align*} 
L_+ &= -\partial_{\xi}^2 + 6 \varphi^2 - 2, \\
\label{L-minus}
L_- &= -\partial_{\xi}^2 + 2 \varphi^2 - 2.
\end{align*}
Separation of variables in system (\ref{nls-lin}) gives the spectral stability problem in the form 
\begin{equation}
\label{lin-stab}
\left[ \begin{array}{cc} 0 & L_- \\ -L_+ & 0 \end{array} \right] 
\left[ \begin{array}{c} U \\ V \end{array} \right] = \lambda (1 - \epsilon^2 \partial_{\xi}^2)
\left[ \begin{array}{c} U \\ V \end{array} \right] \qquad \Leftrightarrow \qquad 
\begin{array}{c} L_- V = \lambda (1 - \epsilon^2 \partial_{\xi}^2)  U, \\ -L_+ U = \lambda (1 - \epsilon^2 \partial_{\xi}^2) V.\end{array}
\end{equation}
It is natural to consider the spectral problem (\ref{lin-stab}) 
for fixed $\epsilon \neq 0$ in $H^1_{\epsilon}(\R) \times H^1_{\epsilon}(\R)$, where  $H^1_{\epsilon}(\R)$  is the Hilbert space
equipped with the associated inner product 
\begin{equation}
\label{inner-product}
(f,g)_{\epsilon} := \int_{\R} \left[ \bar{f} g + \epsilon^2 \bar{f}' g' \right] d \xi.
\end{equation}
and the induced norm $\| \cdot \|_{\epsilon} := \sqrt{ (\cdot,\cdot)_{\epsilon}}$. The Hilbert space $H^1_{\epsilon}(\R)$ for fixed $\epsilon \neq 0$ is equivalent to $H^1(\RR)$, which is 
the form domain of $L_+$ in $L^2(\R)$ and is a subset of the form domain of $L_-$ in $L^2(\RR)$. The inner product and the norm in $L^2(\R)$ correspond to $\epsilon = 0$ and we use notations $(\cdot,\cdot)$ and $\| \cdot \|$ instead of $(\cdot,\cdot)_0$ and $\| \cdot \|_0$.

The essential spectrum of the spectral stability problem (\ref{lin-stab}) can be found in the limit $|\xi| \to \infty$ since $|\varphi(\xi)| \to 1$ exponentially fast. By using the Fourier transform, we find that the essential spectrum is located at 
\begin{equation}
\sigma_c  = \left\{ i k \frac{\sqrt{4+k^2}}{1 + \epsilon^2 k^2}, \quad k \in \mathbb{R} \right\} = i [-\epsilon^{-2},\epsilon^{-2}].
\end{equation}
Hence, the essential spectrum is neutrally stable and the stability or instability of the black soliton depends on isolated eigenvalues $\lambda$ outside $\sigma_c$.

\begin{remark}
	We say that the black soliton $\varphi$ is spectrally unstable if the spectral problem (\ref{lin-stab}) admits an isolated eigenvalue $\lambda_0 \in \CC$ with the corresponding eigenfunction $(U,V) \in H^1(\R) \times H^1(\R)$ such that ${\rm Re}(\lambda_0) > 0$. The black soliton is spectrally stable if no such eigenvalue exists. 
\end{remark}

\begin{remark}
	Isolated eigenvalues may exist on $i \R\backslash \sigma_c$ but such eigenvalues do not contribute to spectral instability of the black soliton.
\end{remark}

Our second result is the following stability theorem, which is the main result of this paper.

\begin{theorem}
	\label{thm-spectral}
	Let $\epsilon_0 = (5/8)^{1/4}$. The black soliton is spectrally stable 
	for $\epsilon \in (0,\epsilon_0]$ and spectrally unstable for $\epsilon \in (\epsilon_0,\infty)$.
\end{theorem}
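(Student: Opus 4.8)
The plan is to symmetrize the generalized eigenvalue problem (\ref{lin-stab}) and reduce the existence of an unstable eigenvalue to the sign of a single scalar. Write $M := 1 - \epsilon^2 \partial_\xi^2$, a positive operator whose associated quadratic form is exactly the inner product $(\cdot,\cdot)_\epsilon$ of (\ref{inner-product}), and set $\hat L_\pm := M^{-1/2} L_\pm M^{-1/2}$. Substituting $U = M^{-1/2}\tilde U$, $V = M^{-1/2}\tilde V$ turns (\ref{lin-stab}) into the symmetric form $\hat L_-\tilde V = \lambda\tilde U$, $-\hat L_+\tilde U = \lambda\tilde V$, equivalently $\hat L_+\hat L_-\tilde V = -\lambda^2\tilde V$. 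Thus $\lambda$ is real and nonzero (the only source of instability, see below) precisely when the product $\hat L_+\hat L_-$, equivalently the self-adjoint operator $S := \hat L_+^{1/2}\hat L_-\hat L_+^{1/2}$, has a negative eigenvalue; one checks that the essential spectrum of $S$ is $[0,\epsilon^{-4}]$, consistent with $\sigma_c$.

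First I would record the spectral data of the reflectionless P\"oschl--Teller operators $L_\pm$: since $6\varphi^2-2 = 4 - 6\,\mathrm{sech}^2\xi$ we have $L_+ \ge 0$ with $\ker L_+ = \mathrm{span}\{\varphi'\}$, $\varphi' = \mathrm{sech}^2\xi$, while $L_- = -\partial_\xi^2 - 2\,\mathrm{sech}^2\xi$ has exactly one negative eigenvalue $-1$ (eigenfunction $\mathrm{sech}\,\xi$) and no $L^2$ kernel, the zero mode $\varphi = \tanh\xi$ being a non-$L^2$ edge resonance. As $M^{\pm 1/2}$ preserves inertia, $n(\hat L_+) = 0$ with $\ker\hat L_+ = \mathrm{span}\{\eta_0\}$, $\eta_0 := M^{1/2}\varphi'$, and $n(\hat L_-) = 1$ with trivial kernel. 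Every eigenfunction of $S$ with nonzero eigenvalue is orthogonal to $\ker\hat L_+$, so $n(S)$ equals the number of negative directions of $\hat L_-$ on $\{\eta_0\}^\perp$. By the single-constraint index formula (in the spirit of Kapitula--Kevrekidis--Sandstede), restricting an operator with one negative eigenvalue to one codimension gives $n(\hat L_-|_{\{\eta_0\}^\perp}) = 1$ if $(\hat L_-^{-1}\eta_0,\eta_0) > 0$ and $0$ if $(\hat L_-^{-1}\eta_0,\eta_0) < 0$. Hence spectral instability is equivalent to the single inequality $(\hat L_-^{-1}\eta_0,\eta_0) > 0$.

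It then remains to evaluate the criterion explicitly. Using $\hat L_-^{-1} = M^{1/2} L_-^{-1} M^{1/2}$ gives $(\hat L_-^{-1}\eta_0,\eta_0) = (L_-^{-1} M\varphi', M\varphi') = (\Psi, M\varphi')$, where $\Psi$ is the bounded solution of $L_-\Psi = M\varphi'$. Exploiting $L_-(1) = -2\,\mathrm{sech}^2\xi$ and $L_-(\mathrm{sech}^2\xi) = -4\,\mathrm{sech}^2\xi + 4\,\mathrm{sech}^4\xi$, together with $M\varphi' = (1-4\epsilon^2)\mathrm{sech}^2\xi + 6\epsilon^2\,\mathrm{sech}^4\xi$, one finds explicitly $\Psi = \tfrac32\epsilon^2\,\mathrm{sech}^2\xi - (\tfrac12 + \epsilon^2)$ (the resonance $\tanh\xi$ drops out by parity). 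Although $\Psi \notin L^2$, the pairing $(\Psi, M\varphi')$ converges because $M\varphi'$ decays exponentially, and the standard integrals $\int_\R \mathrm{sech}^2\xi\,d\xi = 2$, $\int_\R\mathrm{sech}^4\xi\,d\xi = \tfrac43$, $\int_\R\mathrm{sech}^6\xi\,d\xi = \tfrac{16}{15}$ yield the closed form $(\hat L_-^{-1}\eta_0,\eta_0) = \tfrac85\epsilon^4 - 1$. This is negative for $\epsilon^4 < 5/8$ and positive for $\epsilon^4 > 5/8$, so the threshold is exactly $\epsilon_0 = (5/8)^{1/4}$, with one real unstable pair $\pm\lambda$ for $\epsilon > \epsilon_0$ and none for $\epsilon < \epsilon_0$.

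The main obstacle is to make this reduction rigorous across the edge resonance, which is the function-space mismatch flagged in the abstract: because $0$ is the bottom of the essential spectrum of $L_-$, the inverse $L_-^{-1}$ is unbounded and $\Psi \notin L^2$, so $(\hat L_-^{-1}\eta_0,\eta_0)$ must be interpreted as a convergent bilinear pairing rather than a genuine quadratic form, and the single-constraint index lemma has to be justified in an enlarged space accommodating the non-decaying resonance. Two further points must be settled to match the statement: that the borderline case $\epsilon = \epsilon_0$, where the scalar vanishes, remains spectrally stable, the would-be eigenvalue sitting at the resonant edge $\lambda = 0$ rather than in the open right half-plane; and that no complex quadruplet can arise. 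The latter follows because $n(\hat L_+) = 0$, so the Hamiltonian--Krein index bounds the total count of eigenvalues with positive real part by $n(\hat L_-) = 1$; the unique unstable pair is therefore the real one already detected, and the essential spectrum stays neutrally on $i\R$.
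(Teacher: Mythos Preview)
Your proposal is correct and follows essentially the same approach as the paper. Both arguments symmetrize via $M^{-1/2}$ with $M = 1-\epsilon^2\partial_\xi^2$, reduce the existence of an unstable eigenvalue to the sign of the scalar $(L_-^{-1}M\varphi',\,M\varphi')$, and evaluate it explicitly by solving $L_-\Psi = M\varphi'$ to obtain $\tfrac{8}{5}\epsilon^4 - 1$; your $\Psi$ coincides with the paper's $V_\varphi$ and the computations match line by line. The only cosmetic difference is in the intermediate packaging: the paper inverts the restriction $\mathcal{T}_+ = \mathcal{L}_+|_{\{W_0\}^\perp}$, lands on a generalized Rayleigh quotient for $L_-$ under the constraint $(\varphi',V)_\epsilon = 0$, and invokes the Di~Menza--Gallo dichotomy for its sign, whereas you take the square root $\hat L_+^{1/2}$, form $S = \hat L_+^{1/2}\hat L_-\hat L_+^{1/2}$, and count negative eigenvalues via the single-constraint (KKS-type) index formula. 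Your handling of the edge resonance of $L_-$, the borderline $\epsilon = \epsilon_0$, and the exclusion of complex quadruplets via $n(\hat L_+)=0$ parallels what the paper does through the limit $\lambda\to 0^-$ and the self-adjointness of the reduced problem.
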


\begin{remark}
	The stability threshold $\epsilon_0 = (5/8)^{1/4}$ is approximately 
	$\epsilon_0 \approx 0.89$. 	In view of the transformation (\ref{transform}), the stability threshold in the original NLS model (\ref{nls-ext}) is given by 
	$$
	\mu_0 = \sqrt{\frac{\sqrt{5}}{\sqrt{8} + 2 \sqrt{5}}} \approx 0.5534. 
	$$
	The black soliton is orbitally stable for $\mu \in (0,\mu_0)$ 
	and orbitally unstable for $\mu \in (\mu_0,\mu_1)$, 
	where $\mu_1 = \frac{1}{\sqrt{2}} \approx 0.7071$.
\end{remark}

It is tempting to extend the spectral stability of Theorem \ref{thm-spectral} 
to the orbital stability of the black soliton similar to \cite{BGSS,GPII,GS} for the canonical NLS equation (\ref{nls-canon}). The proof of orbital stability of the black soliton was also developed for other NLS models such as 
the quintic NLS equation \cite{Alejo}, the coupled NLS systems \cite{CPP18}, and the NLS equation with intensity-dependent dispersion \cite{Plum}. Orbital stability is usually proven with the use of conserved quantities. Our third result specifies the conserved quantities of the regularized NLS equation (\ref{nls}). 

\begin{theorem}
	\label{thm-conservation} Let $u = \varphi + v$ with $v \in C^1([0,\tau_0),H^s(\mathbb{R}))$ be the local solution in Theorem \ref{thr-local-well} with $s > \frac{1}{2}$. Then, energy
	\begin{equation}
	\label{energy}
	E(u) = \int_{\mathbb{R}} \left[ |u_{\xi}|^2 + (1-|u|^2)^2 \right] d\xi.
	\end{equation} 
	and momentum 
	\begin{equation}
	\label{momentum}
	P(u) = i \int_{\mathbb{R}} \left[ (\bar{u} u_{\xi} - \bar{u}_{\xi} u) + 
	\epsilon^2 (\bar{u}_{\xi} u_{\xi \xi} - \bar{u}_{\xi \xi} u_{\xi}) \right] d\xi.
	\end{equation}
	are well defined for $s \geq 1$ and $s \geq 2$ respectively, and their values are independent of $t \in [0,\tau_0)$.
\end{theorem}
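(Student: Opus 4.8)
The plan is to prove the statement in three stages: first the well-definedness of $E$ and $P$, then the formal conservation identities, and finally a density argument that makes the formal computation rigorous at the claimed regularities. For well-definedness I would substitute $u = \varphi + v$ with $\varphi = \tanh$. Since $s > \tfrac12$ gives $H^s(\R) \hookrightarrow L^\infty(\R)$ with $v(\xi) \to 0$ as $|\xi|\to\infty$, the perturbation is bounded and decays. For the energy I would expand $|u_\xi|^2 \le 2|\varphi_\xi|^2 + 2|v_\xi|^2$ and $1 - |u|^2 = (1-\varphi^2) - 2\varphi\,{\rm Re}(v) - |v|^2$; using $\varphi_\xi, 1-\varphi^2 = {\rm sech}^2\xi \in L^2(\R)$, the bound $\|v\|_{L^\infty}<\infty$, and $\||v|^2\|_{L^2} = \|v\|_{L^4}^2 < \infty$, each summand lies in $L^2(\R)$, so the integrand is in $L^1(\R)$ as soon as $v_\xi \in L^2(\R)$, i.e. for $s \ge 1$. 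For the momentum it is convenient to set $M := 1 - \epsilon^2\partial_\xi^2$ and record the identity
$$P(u) = i\int_\R \big[ (M\bar u)\,u_\xi - \bar u_\xi\,(Mu)\big]\,d\xi ,$$
obtained by expanding $M$. Since $\varphi$ is real, the purely background part of this integrand vanishes pointwise, so only terms at least linear in $v$ survive: the quadratic ones are products of two $L^2$ factors (with $L^\infty$ factors) drawn from $v,v_\xi,v_{\xi\xi}$ and are integrable for $s\ge 2$, while the linear terms carrying the non-decaying factor $\varphi$ are handled by interpreting the integral as $\lim_{R\to\infty}\int_{-R}^R$ and integrating by parts, the boundary contributions vanishing because $v \to 0$ at infinity.

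For conservation of energy I would differentiate under the integral and integrate by parts to obtain
$$\frac{dE}{dt} = -2\,{\rm Re}\int_\R \big[\bar u_{\xi\xi} + 2(1-|u|^2)\bar u\big]\,u_t\,d\xi .$$
The complex conjugate of (\ref{nls}) gives $\bar u_{\xi\xi} + 2(1-|u|^2)\bar u = i M\bar u_t$, so that $\tfrac{dE}{dt} = -2\,{\rm Re}\big[i\,(u_t,u_t)_\epsilon\big]$; since $(u_t,u_t)_\epsilon = \|u_t\|_\epsilon^2$ is real, the right-hand side is zero.

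For conservation of momentum I would differentiate the compact form, using that $M$ commutes with $\partial_\xi$ and is self-adjoint. Integrating by parts in the two terms containing $u_{\xi t}$ shows they reproduce the two terms containing $u_t$, giving $\tfrac{dP}{dt} = 2i\int_\R[(M\bar u_t)u_\xi - \bar u_\xi(Mu_t)]\,d\xi$. Substituting $Mu_t = i[u_{\xi\xi} + 2(1-|u|^2)u]$ and its conjugate, the integrand collapses to an exact derivative,
$$(M\bar u_t)u_\xi - \bar u_\xi(Mu_t) = -i\,\partial_\xi\big[\,|u_\xi|^2 - (1-|u|^2)^2\,\big],$$
whose integral is a boundary term that vanishes because $u_\xi \to 0$ and $1-|u|^2 \to 0$ at infinity; hence $\tfrac{dP}{dt}=0$.

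I expect the main obstacle to be rigor rather than algebra. The steps above — differentiation under the integral, repeated integration by parts, and the vanishing of boundary terms — are not directly licensed at the endpoint regularities $s=1$ and $s=2$, where $v_\xi$, respectively $v_{\xi\xi}$, need not decay pointwise and where the slowly-decaying background makes absolute convergence of $P$ delicate. The plan is to first carry out the computation for smooth, rapidly decaying perturbations $v_0$, for which the solution of Theorem \ref{thr-local-well} is smooth and every manipulation is justified, and then to extend conservation to arbitrary $v_0 \in H^s(\R)$ by density, invoking the continuous dependence of the solution on $v_0$ from Theorem \ref{thr-local-well} together with the continuity of $u \mapsto E(u)$ on $\{\varphi + v : v \in H^1(\R)\}$ and of $u \mapsto P(u)$ on $\{\varphi + v : v \in H^2(\R)\}$ established in the first step.
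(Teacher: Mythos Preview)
Your proposal is correct and follows essentially the paper's approach: the energy argument is the same self-adjointness/Hamiltonian observation (your identity $\tfrac{dE}{dt}=-2\,\mathrm{Re}[i\|u_t\|_\epsilon^2]=0$ is exactly the paper's remark that $(1-\epsilon^2\partial_\xi^2)^{-1}$ is self-adjoint in the Hamiltonian form), and for the momentum you differentiate $P$ directly and reduce to an exact $\xi$-derivative, whereas the paper obtains the same identity by multiplying the equation and its $\xi$-derivative by $\bar u_\xi$ and $\bar u$ and subtracting---different bookkeeping of the same computation. Your explicit density step to justify the manipulations at the endpoint regularities $s=1,2$ is a welcome bit of rigor that the paper leaves implicit.
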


\begin{remark}
	Besides the energy and momentum conservation, the NLS equation (\ref{nls}) admits also mass conservation, 
	\begin{equation}
	\label{mass}
	M(u) = \int_{\mathbb{R}} \left[  \epsilon^2 |u_{\xi}|^2 + |u|^2  - 1 \right] d\xi,
	\end{equation}
if the solution $u = \varphi + v$ with $v \in C^1([0,\tau_0),H^s(\mathbb{R}))$ and $s \geq 1$ satisfies $v(t,\cdot) \in L^1(\mathbb{R})$ for $t \in [0,\tau_0)$. The mass conservation plays no role in the proof of orbital stability of the black soliton in the canonical NLS equation (\ref{nls-canon}) \cite{BGSS,GS}.
\end{remark}

\begin{remark}
	The proof of orbital stability of the black soliton is an open problem for the regularized NLS equation (\ref{nls}) because of the mismatch between the energy and momentum spaces. The energy arguments only provide control of the perturbation in the weighted $H^1(\mathbb{R})$ spaces with the exponential weight \cite{GS}, where the weight is needed due to the lack of coercivity of the quadratic form associated with the linearized operator $L_-$, see also \cite{Alejo,CPP18,Plum}. However, the momentum conservation is not defined in the weighted $H^1(\R)$ space and the energy arguments do not control perturbations in the weighted $H^2(\R)$ space. 
\end{remark}

The remainder of this paper is organized as follows. Section \ref{sec-2}  presents the proof of Theorems \ref{thr-local-well} and \ref{thm-conservation}. Section \ref{sec-3} is devoted to the proof of Theorem \ref{thm-spectral}. 
The numerical illustrations of the stable and unstable dynamics of the perturbed black soliton are contained in Section \ref{sec-4}. Section \ref{sec-5} concludes the paper with a summary and the discussion of further directions.

\section{Local well-posedness and conserved quantities}
\label{sec-2}

Let us write $u = \varphi + v$, where $\varphi(\xi) = \tanh(\xi)$, 
and reduce the NLS equation (\ref{nls}) to the evolutionary form:
\begin{equation}
\label{nls-evol}
v_t = i (1-\epsilon^2 \partial_{\xi}^2)^{-1} \left[ v_{\xi \xi} 
+ 2(1-2\varphi^2) v - 2 \varphi^2 \bar{v} - 2 \varphi (v^2 + 2 |v|^2) - 2 |v|^2 v \right],
\end{equation}
where we have used $\varphi'' + 2 (1-\varphi^2) \varphi = 0$ satisfied by 
$\varphi(\xi) = \tanh(\xi)$. The proof of Theorem \ref{thr-local-well} follows from the contraction mapping 
principle according to the following arguments. 

\vspace{0.25cm}

\begin{proof1}{\em of Theorem \ref{thr-local-well}.}
We recall that $H^s(\R)$ forms a Banach algebra with respect to pointwise multiplication if $s > \frac{1}{2}$. Hence, 
$$
F(v) := v_{\xi \xi} 
+ 2(1-2\varphi^2) v - 2 \varphi^2 \bar{v} - 2 \varphi (v^2 + 2 |v|^2) - 2 |v|^2 v
$$
is a bounded operator from $H^s(\R)$ to $H^{s-2}(\R)$, whereas 
$(1 - \epsilon^2 \partial_{\xi}^2)^{-1}$ is a bounded operator from $H^{s-2}(\R)$ to $H^s(\R)$ for every $\epsilon > 0$. It follows from the integral formulation 
of the evolution equation (\ref{nls-evol}) and the contraction mapping principle that there exists a local solution $v \in C^0([0,t_0],H^s(\R))$ 
for sufficiently small $t_0 > 0$. Moreover, since the operator 
$i (1 - \epsilon^2 \partial_{\xi}^2)^{-1} F : H^s(\R) \to H^s(\R)$ is bounded in a ball of $H^s(\R)$ of a finite radius, then the solution $v$ belongs to $C^1([0,t_0],H^s(\R))$. Continuing smoothly the local solution to the maximal time $\tau_0 > 0$ (which could be finite or infinite) yields the solution $v \in C^1([0,\tau_0),H^s(\R))$ in Theorem \ref{thr-local-well}. 

Continuous dependence of $v \in C^1([0,t_0],H^s(\R))$ on $v_0 \in H^s(\R)$ is obtained from the contraction principle by the standard Gronwall's estimates. Iterating the estimates to the maximal existence time, we obtain 
continuous dependence of $\tau_0 \in (0,\infty]$ and  $v \in C^1([0,\tau_0),H^s(\R))$ on $v_0 \in H^s(\R)$.
\end{proof1}

\vspace{0.25cm}

By using the transformation $u = \varphi + v$, we can write from (\ref{energy}), (\ref{momentum}), and (\ref{mass}):
\begin{align*}
E(\varphi + v) &= E(\varphi) + \hat{E}(v), \\
P(\varphi + v) &= P(\varphi) + \hat{P}(v), \\
M(\varphi + v) &= M(\varphi) + \hat{M}(v),
\end{align*}
with 
\begin{align*}
\hat{E}(v) &= \int_{\mathbb{R}} [|v_{\xi}|^2 - 2(1-2 \varphi^2) |v|^2 
+ \varphi^2 (v^2 + \bar{v}^2) + 2 \varphi |v|^2 (v + \bar{v}) + |v|^4] d \xi, \\
\hat{P}(v) &= i \int_{\mathbb{R}} [2 \varphi' (\bar{v} - v) + 
(\bar{v} v_{\xi} - \bar{v}_{\xi} v) + 2 \epsilon^2 
\varphi'' (\bar{v}_{\xi} - v_{\xi}) + \epsilon^2 (\bar{v}_{\xi} v_{\xi \xi} - \bar{v}_{\xi \xi} v_{\xi})] d \xi, \\
\hat{M}(v) &= \int_{\mathbb{R}} [(\varphi -\epsilon^2 \varphi'') (v + \bar{v})
+ \epsilon^2 |v_{\xi}|^2 + |v|^2] d \xi,
\end{align*}
where we have used that $\varphi'' + 2 (1-\varphi^2) \varphi = 0$.
The proof of Theorem \ref{thm-conservation} is obtained from specific computations for the NLS equation (\ref{nls-evol}). 

\vspace{0.25cm}

\begin{proof1}{\em of Theorem \ref{thm-conservation}.}
	The energy functional $\hat{E}(v) : H^s(\R) \to \R$ is smooth in $v$ and $\bar{v}$ for $s \geq 1$. The evolution equation (\ref{nls-evol}) can be cast to the Hamiltonian form 
\begin{equation}
\label{Ham}
\frac{d v}{dt} = -i (1 - \epsilon^2 \partial_{\xi}^2)^{-1} \nabla_{\bar{v}} \hat{E}(v),
\end{equation}
where $\nabla_{\bar{v}} \hat{E}(v) = -F(v)$ is the variational derivative 
of $\hat{E}(v)$ with respect to $\bar{v}$. Since $(1 - \epsilon^2 \partial_{\xi}^2)^{-1} : L^2(\R) \to L^2(\R)$ is self-adjoint, the energy $\hat{E}(v)$ is constant in time $t \in [0,\tau_0)$ 
for the solution $v \in C^1([0,\tau_0),H^s(\R))$ with $s \geq 1$ which exists by Theorem \ref{thr-local-well}. 

The momentum functional $\hat{P}(v) : H^s(\R) \to \R$ is smooth in $v$ and $\bar{v}$ for $s \geq 2$. To prove its conservation, we consider the solution $v \in C^1([0,\tau_0),H^s(\R))$ for $s \geq 2$ which exists by Theorem \ref{thr-local-well}. Since $P(\varphi) = 0$, we can work equivalently 
with $P(u) = \hat{P}(v)$. Differentiating (\ref{nls}) in $\xi$, multiplying it by $\bar{u}$, adding the complex conjugate, and integrating in $\xi$ over $\RR$ yield:
\begin{align*}
& i \int_{\RR} \left[ \bar{u}(1-\epsilon^2 \partial_{\xi}^2) u_{t \xi} - u (1 - \epsilon^2 \partial_{\xi}^2) \bar{u}_{t \xi} \right] d\xi \\
& + \int_{\RR} 
\left[ \bar{u} u_{\xi \xi \xi} + u \bar{u}_{\xi \xi \xi} \right] d\xi 
+ 2 \int_{\RR} 
\left[ \bar{u} u_{\xi} + u \bar{u}_{\xi} - 3 |u|^2 (|u|^2)_{\xi}  \right] d\xi = 0.
\end{align*}
Multiplying (\ref{nls}) by $\bar{u}_{\xi}$, adding the complex conjugate, and integrating in $\xi$ over $\mathbb{R}$ yield 
\begin{align*}
& i \int_{\RR} \left[ \bar{u}_{\xi} (1-\epsilon^2 \partial_{\xi}^2) u_t - u_{\xi} (1 - \epsilon^2 \partial_{\xi}^2) \bar{u}_t \right] d\xi \\
& + \int_{\RR} 
\left[ \bar{u}_{\xi} u_{\xi \xi} + u_{\xi} \bar{u}_{\xi \xi} \right] d\xi 
+ 2 \int_{\RR} 
\left[ \bar{u}_{\xi} u + u_{\xi} \bar{u} - |u|^2 (|u|^2)_{\xi}  \right] d\xi = 0.
\end{align*}
Subtracting the two equations and integrating by parts give conservation of $P(u) = \hat{P}(v)$ in time $t \in [0,\tau_0)$. 
\end{proof1}

\begin{remark}	
	The mass functional $\hat{M}(v) : H^s(\R) \to \R$ is smooth in $v$ and $\bar{v}$ for $s \geq 1$ under the additional condition $v \in L^1(\R)$ 
	since $\varphi(\xi) \to \pm 1$ as $\xi \to \pm \infty$.
	Assuming the existence of the solution $v \in C^1([0,\tau_0),H^s(\R))$ for $s \geq 1$ such that $v(t,\cdot) \in L^1(\R)$ for $t \in [0,\tau_0)$, we can prove conservation of $\hat{M}(v)$ as follows. Multiplying (\ref{nls}) by $\bar{u}$, subtracting the complex conjugate, and integrating in $\xi$ over $\mathbb{R}$ yield 
	$$
	i \int_{\RR} \left[ \bar{u}(1-\epsilon^2 \partial_{\xi}^2) u_t + u (1 - \epsilon^2 \partial_{\xi}^2) \bar{u}_t \right] d\xi + \int_{\RR} 
	\left[ \bar{u} u_{\xi \xi} - u \bar{u}_{\xi \xi} \right] d\xi = 0.
	$$
	Integrating by parts gives conservation of $M(u)$ in tme $t \in [0,\tau_0)$.
\end{remark}

\section{Spectral stability and instability of the black soliton}
\label{sec-3}

In order to prove Theorem \ref{thm-spectral}, we first clarify the spectral properties of the Schr\"{o}dinger operator $L_+ : {\rm Dom}(L_+) \subset L^2(\R) \to L^2(\R)$, where $L_+ = -\partial_{\xi}^2 + 6 \varphi^2 - 2 = -\partial_{\xi}^2 + 4 - 6 \; {\rm sech}^2(\xi)$. As is well-known, any Schr\"{o}dinger operator with bounded potential can be extended to its form domain, hence we can write $L_+ : H^1(\R) \to H^{-1}(\R)$. Similarly, we can consider 
bounded operators $(1 - \epsilon^2 \partial_{\xi}^2)^{-1/2} : L^2(\R) \to H^1(\R)$ and $(1 - \epsilon^2 \partial_{\xi}^2)^{-1/2} : H^{-1}(\R) \to L^2(\R)$. By compositing the three operators above, we obtain a bounded operator 
$$
\mathcal{L}_+ = (1 - \epsilon^2 \partial_{\xi}^2)^{-1/2} L_+ (1 - \epsilon^2 \partial_{\xi}^2)^{-1/2} : L^2(\R) \to L^2(\R).
$$
Coercivity of $L_+ : H^1(\R) \to H^{-1}(\R)$ and 
$\mathcal{L}_+ : L^2(\R) \to L^2(\R)$ is obtained in the following lemma. 

\begin{lemma}
	\label{lem-L-plus}
	There exists $C > 0$ such that 
	\begin{equation}
	\label{coer-1}
\langle L_+ U, U \rangle \geq C \| U \|^2, \quad \mbox{\rm for every } U \in H^1(\R) : \;\; (U,\varphi') = 0
	\end{equation}
and
	\begin{equation}
	\label{coer-2}
	(\mathcal{L}_+ W,W) \geq C \| W \|^2, \quad \mbox{\rm for every } W \in L^2(\R) : \;\; (W,W_0) = 0,
	\end{equation}
where $\langle L_+ U, U \rangle$ is the dual action of $L_+ U \in H^{-1}(\R)$ on $U \in H^1(\R)$ and $W_0 := (1 - \epsilon^2   \partial_{\xi}^2)^{1/2} \varphi'$.
\end{lemma}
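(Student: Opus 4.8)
The plan is to read both inequalities off the spectrum of the P\"{o}schl--Teller operator $L_+$ and then transfer the information to $\mathcal{L}_+$ by a congruence argument. First I would record the spectrum of $L_+ = -\partial_{\xi}^2 + 4 - 6\,{\rm sech}^2(\xi)$. This is an exactly solvable reflectionless Schr\"{o}dinger operator: its continuous spectrum is $[4,\infty)$, and below it there are exactly two simple eigenvalues, the lowest being $0$ with eigenfunction $\varphi' = {\rm sech}^2(\xi)$ (consistent with the identity $L_+ \varphi' = 0$ obtained by differentiating $\varphi'' + 2(1-\varphi^2)\varphi = 0$) and the second being $3$ with eigenfunction ${\rm sech}(\xi)\tanh(\xi)$. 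Since $\varphi'$ is strictly positive it is indeed the ground state, so $L_+ \geq 0$ with simple kernel spanned by $\varphi'$. Given this, (\ref{coer-1}) follows from the min--max principle: the constraint $(U,\varphi') = 0$ forces $U$ into the spectral subspace associated with $\sigma(L_+) \setminus \{0\} \subset [3,\infty)$, whence $\langle L_+ U, U \rangle \geq 3 \| U \|^2$ and one may take $C = 3$.

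For (\ref{coer-2}) I would write $\mathcal{L}_+ = A^{-1} L_+ A^{-1}$ with the self-adjoint positive operator $A = (1-\epsilon^2 \partial_{\xi}^2)^{1/2}$, so that $W_0 = A \varphi'$. The key identity is $(\mathcal{L}_+ W, W) = \langle L_+ U, U \rangle$ with $U := A^{-1} W \in H^1(\R)$, and as $W$ ranges over $L^2(\R)$ the function $U$ ranges over all of $H^1(\R)$. Because $A^{-1} : L^2(\R) \to H^1(\R)$ is a bijection, this congruence preserves the relevant inertia: $\mathcal{L}_+ \geq 0$, and $\mathcal{L}_+ W = 0$ forces $\langle L_+ U, U \rangle = 0$, hence $U \in {\rm span}(\varphi')$ and $W \in A\,{\rm span}(\varphi') = {\rm span}(W_0)$, so the kernel is exactly the one-dimensional space ${\rm span}(W_0)$. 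Thus $0$ is the bottom of $\sigma(\mathcal{L}_+)$ and the constrained quadratic form is nonnegative; the remaining task is to upgrade this to coercivity, i.e.\ to exhibit a spectral gap above $0$.

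The gap is produced by locating the essential spectrum. Splitting $L_+ = (-\partial_{\xi}^2 + 4) - 6\,{\rm sech}^2(\xi)$ gives $\mathcal{L}_+ = A^{-1}(-\partial_{\xi}^2 + 4)A^{-1} - 6\, A^{-1} {\rm sech}^2(\xi) A^{-1}$; the first term is the Fourier multiplier $(k^2 + 4)/(1 + \epsilon^2 k^2)$ while the second is compact, since multiplication by the decaying function ${\rm sech}^2(\xi)$ sandwiched between the smoothing operators $A^{-1}$ is compact on $L^2(\R)$. By Weyl's theorem the essential spectrum of $\mathcal{L}_+$ equals the range of the multiplier, which is bounded below by $\min(4, \epsilon^{-2}) > 0$. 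Consequently $0$ is an isolated simple eigenvalue of $\mathcal{L}_+$, there is a spectral gap $(0,\delta)$ with $\delta > 0$, and restricting to $W_0^{\perp} = (\ker \mathcal{L}_+)^{\perp}$ yields $(\mathcal{L}_+ W, W) \geq \delta \| W \|^2$, which is (\ref{coer-2}).

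I expect the main obstacle to be this last step for $\mathcal{L}_+$: unlike $L_+$, the operator $\mathcal{L}_+$ is not a Schr\"{o}dinger operator and its spectrum cannot be read off a table, so positivity of the gap must be argued indirectly through the inertia-preserving congruence (to pin down nonnegativity and the kernel) together with Weyl's theorem and the relative compactness of the ${\rm sech}^2$ perturbation (to push the essential spectrum away from $0$). Verifying that this perturbation is genuinely compact and that no discrete eigenvalues accumulate at $0$ is the technical heart of the argument.
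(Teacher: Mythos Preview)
Your proof is correct and follows the paper's approach: both derive (\ref{coer-1}) from the fact that $\varphi' > 0$ is the ground state of $L_+$ separated from the rest of the spectrum by a gap, and both derive (\ref{coer-2}) by combining the congruence $\mathcal{L}_+ = A^{-1} L_+ A^{-1}$ (with $A = (1-\epsilon^2\partial_\xi^2)^{1/2}$) with the location of $\sigma_c(\mathcal{L}_+) = \{(4+k^2)/(1+\epsilon^2 k^2):k\in\R\}$ strictly above zero. You are more explicit about the P\"oschl--Teller eigenvalues and the Weyl/compactness step, while the paper transfers the $H^1_\epsilon$-upgraded bound directly through the substitution $W = AU$, but the strategy is the same.
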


\begin{proof}
	We have $L_+ \varphi' = 0$ and $\varphi' \in H^1(\R)$ due to the translational invariance and the exponential decay of  $\varphi'(\xi) \to 0$ as $|\xi| \to \infty$. Since $\varphi'(\xi) > 0$ for all $\xi \in \R$, the zero eigenvalue of $L_+$ in $L^2(\R)$ is the lowest eigenvalue separated from the rest of the spectrum in $L^2(\R)$ by a gap. The spectral theorem implies the coercivity bound (\ref{coer-1}).
	
	By using the transformation $U =  (1 - \epsilon^2   \partial_{\xi}^2)^{-1/2} W$ with $W \in L^2(\R)$, we get $\mathcal{L}_+ W_0 = 0$ with $W_0 := (1-\epsilon \partial_{\xi}^2)^{1/2} \varphi' \in L^2(\R)$. The zero eigenvalue of $\mathcal{L}_+$ in $L^2(\R)$ is separated  from the continuous spectrum of $\mathcal{L}_+$ by a gap:
		$$
	\sigma_c(\mathcal{L}_+) = \left\{ \frac{4+k^2}{1+\epsilon^2 k^2}, \;\; k \in \mathbb{R} \right\} = 
	\left\{ \begin{array}{ll} 
	& [4,\epsilon^{-2}], \quad \epsilon \in (0,\frac{1}{2}), \\
	& [ \epsilon^{-2},4], \quad \epsilon \in (\frac{1}{2},\infty) \end{array} \right.
	$$
The coercivity bound (\ref{coer-1}) in $L^2(\R)$ implies a coercivity bound in $H^1(\R)$, that is, 
\begin{equation*}
(L_+ U, U) \geq C \| U \|_{\epsilon}^2, \quad \mbox{\rm for every } U \in H^1(\R) : \;\; (U,\varphi') = 0.
\end{equation*}
This bound is equivalent to (\ref{coer-2}) since 
$(L_+ U, U) = (\mathcal{L}_+ W, W)$, $\| U \|_{\epsilon}^2 = \| W \|^2$, and $(U,\varphi') = (W, W_0)$ for $U = (1-\epsilon^2 \partial_{\xi}^2)^{-1/2} W$.
\end{proof}

By Lemma \ref{lem-L-plus}, we can define the constrained operator
\begin{align*}
\mathcal{T}_+ := \mathcal{L}_+ |_{\{W_0 \}^{\perp}}  \; : \; L^2(\R) |_{\{W_0 \}^{\perp}} \mapsto L^2(\R) |_{\{W_0\}^{\perp}}, 
\end{align*}
Thanks to the bound (\ref{coer-2}), $\mathcal{T}_+$ is invertible and strictly positive with a bounded and strictly positive inverse $\mathcal{T}_+^{-1}$. 

In addition to Lemma \ref{lem-L-plus}, we also need a technical computation 
related to the Schr\"{o}dinger operator $L_- : {\rm Dom}(L_-) \subset L^2(\R) \to L^2(\R)$, where $L_- = -\partial_{\xi}^2 + 2 \varphi^2 - 2 = -\partial_{\xi}^2 - 2 \; {\rm sech}^2(\xi)$. The technical computation is given by the following lemma. 

\begin{lemma}
	\label{lem-L-minus} The linear inhomogeneous equation 
\begin{equation}
\label{lin-inhom}
	L_- V_{\varphi} =  (1 - \epsilon^2 \partial_{\xi}^2) \varphi'
\end{equation}
	admits a unique even and bounded solution $V_{\varphi}$ satisfying  
	\begin{align*}
	(\varphi',V_{\varphi})_{\epsilon} = -1 + \frac{8}{5} \epsilon^4.
	\end{align*}
Hence $(\varphi',V_{\varphi})_{\epsilon} \leq 0$ if $\epsilon \in [0,\epsilon_0]$ and $(\varphi',V_{\varphi})_{\epsilon} > 0$ if $\epsilon \in (\epsilon_0,\infty)$, where $\epsilon_0 := (5/8)^{1/4}$.
\end{lemma}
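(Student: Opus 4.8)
The plan is to solve the ODE (\ref{lin-inhom}) explicitly, exploiting that $\varphi(\xi)=\tanh\xi$, so $\varphi'={\rm sech}^2\xi$ and $L_-=-\partial_{\xi}^2-2\,{\rm sech}^2\xi$. First I would record the structure of the homogeneous problem $L_- y=0$: one solution is $y_1=\varphi=\tanh\xi$, which is odd and bounded, while a second, linearly independent solution is $y_2=\xi\tanh\xi-1$, which is even but grows linearly as $|\xi|\to\infty$. Consequently the only bounded \emph{even} solution of $L_- y=0$ is the trivial one, which already secures uniqueness: any two even bounded solutions of (\ref{lin-inhom}) differ by a bounded even element of $\ker L_-$, hence coincide.

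Next I would construct a particular solution. Writing the right-hand side as $(1-\epsilon^2\partial_{\xi}^2)\varphi'=\varphi'-\epsilon^2\varphi'''$ and using linearity, it suffices to solve $L_- A=\varphi'$ and $L_- B=-\varphi'''$ separately and set $V_{\varphi}=A+\epsilon^2 B$. The computation is streamlined by two identities: $L_- c=-2c\,{\rm sech}^2\xi$ for a constant $c$, and $L_-\varphi'=-4\varphi^2\varphi'$, the latter following from $L_+\varphi'=0$ together with $L_+=L_-+4\varphi^2$. A short ansatz in the functions $1$ and ${\rm sech}^2\xi$ then yields $A=-\tfrac12$ and $B=\tfrac32{\rm sech}^2\xi-1$, so that
$$V_{\varphi}=-\frac12-\epsilon^2+\frac32\epsilon^2\,{\rm sech}^2\xi,$$
which is manifestly even and bounded, confirming existence by direct construction.

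Finally I would evaluate the pairing from the definition (\ref{inner-product}), namely $(\varphi',V_{\varphi})_{\epsilon}=\int_{\R}\left(\varphi' V_{\varphi}+\epsilon^2\varphi'' V_{\varphi}'\right)d\xi$, using $\varphi''=-2\,{\rm sech}^2\xi\tanh\xi$ and $V_{\varphi}'=-3\epsilon^2\,{\rm sech}^2\xi\tanh\xi$. Reducing all integrands to powers of ${\rm sech}$ and using the standard values $\int_{\R}{\rm sech}^2=2$, $\int_{\R}{\rm sech}^4=\tfrac43$, $\int_{\R}{\rm sech}^6=\tfrac{16}{15}$ (together with ${\rm sech}^4\tanh^2={\rm sech}^4-{\rm sech}^6$), the $\epsilon^2$-contributions cancel in the first integral, leaving $-1$, while the second integral contributes $\tfrac85\epsilon^4$. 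This gives $(\varphi',V_{\varphi})_{\epsilon}=-1+\tfrac85\epsilon^4$, and the sign dichotomy follows at once by comparing $\epsilon^4$ with $\tfrac58$.

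I do not anticipate a serious obstacle. The only genuinely delicate point is the existence and uniqueness of a bounded even solution, since $L_-$ possesses the bounded zero mode $\varphi$. This is resolved by the parity argument above: $\varphi$ is odd, the even homogeneous solution $\xi\tanh\xi-1$ is unbounded, and solvability is established by explicit construction rather than through a Fredholm alternative in $L^2(\R)$, which does not apply here because $\varphi\notin L^2(\R)$.
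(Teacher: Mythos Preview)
Your proposal is correct and follows essentially the same route as the paper: both identify the explicit particular solution $V_{\varphi}=-\tfrac12(1+2\epsilon^2)+\tfrac32\epsilon^2\,{\rm sech}^2\xi$, rule out the homogeneous contributions $\tanh\xi$ and $\xi\tanh\xi-1$ by parity and boundedness, and then evaluate $(\varphi',V_{\varphi})_{\epsilon}$ explicitly. The only cosmetic difference is that the paper integrates the $\epsilon^2\varphi'' V_{\varphi}'$ term by parts before computing, writing the pairing as $\int[(1-\epsilon^2\partial_\xi^2)\varphi']\,V_{\varphi}\,d\xi$, whereas you evaluate the two pieces of the inner product directly; the arithmetic and the result are identical.
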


\begin{proof}
A general solution $V_{\varphi}$ of the linear equation (\ref{lin-inhom}) 
can be found in the explicit form by substitutions as 
	\begin{equation*}
	V_{\varphi}(\xi) = -\frac{1}{2} (1 + 2 \epsilon^2) + \frac{3}{2} \epsilon^2 {\rm sech}^2(\xi) + c_1 \tanh(\xi) + c_2 \left[ \xi \tanh(\xi) - 1 \right],
	\end{equation*}
	where $c_1$ and $c_2$ are arbitrary constants. If $V_{\varphi}$ is required to be even and bounded, then $c_1 = 0$ and $c_2 = 0$ respectively.
	Although $V_{\varphi} \notin L^2(\R)$, the inner product $(\varphi',V_{\varphi})_{\epsilon}$ makes sense due to the exponential 
	decay of $\varphi'(\xi) \to 0$ as $|\xi| \to \infty$ and can be computed explicitly:
\begin{align*}
	(\varphi',V_{\varphi})_{\epsilon} &= \int_{\mathbb{R}} \left[ 
	(1-4 \epsilon^2) {\rm sech}^2(\xi) + 6 \epsilon^2 {\rm sech}^4(\xi) \right] 
	\left[ -\frac{1}{2} (1 + 2 \epsilon^2) + \frac{3}{2} \epsilon^2 {\rm sech}^2(\xi) \right] d \xi \\
	&= -(1+2\epsilon^2) (1 - 4 \epsilon^2) + 2 \epsilon^2 (1-4 \epsilon^2) 
	-4 \epsilon^2 (1+2\epsilon^2) + \frac{48}{5} \epsilon^4 \\
	&= -1 + \frac{8}{5} \epsilon^4,
	\end{align*}
which yields the result.
\end{proof}

We are now ready to prove Theorem \ref{thm-spectral}.

\vspace{0.25cm}

\begin{proof1}{\em of Theorem \ref{thm-spectral}.}
By Fredholm's theory, if there exists a solution of the spectral problem (\ref{lin-stab}) in $H^1(\R) \times H^1(\R)$ for $\lambda \neq 0$, then the component $V$ satisfies the orthogonality condition $(\varphi',V)_{\epsilon} = 0$. We will show that an eigenvalue $\lambda_0 \in \mathbb{C} \backslash i \mathbb{R}$ exists if and only if $\epsilon \in (\epsilon_0,\infty)$. Since, 
for every eigenvalue $\lambda_0$, $-\lambda_0$ is also an eigenvalue, this will prove the theorem.

For any eigenvalue $\lambda_0 \neq 0$, the corresponding eigenvector 
$(U,V) \in H^1(\R) \times H^1(\R)$ must satisfy $(\varphi',V)_{\epsilon} = 0$.  The second equation of the system (\ref{lin-stab}) can be written in the form
\begin{equation}
\label{tech-comp}
\mathcal{L}_+ (1-\epsilon^2 \partial_{\xi}^2)^{1/2} U = -\lambda_0 (1-\epsilon^2 \partial_{\xi}^2)^{1/2} V,
\end{equation}
where the right-hand side is orthogonal to $W_0 = (1 - \epsilon^2   \partial_{\xi}^2)^{1/2} \varphi'$ since
$$
((1-\epsilon^2 \partial_{\xi}^2)^{1/2} V, W_0) = 
((1-\epsilon^2 \partial_{\xi}^2)^{1/2} V, (1-\epsilon^2 \partial_{\xi}^2)^{1/2} \varphi') = (V, \varphi')_{\epsilon} = 0.
$$
Furthermore, we have 
$$
((1-\epsilon^2 \partial_{\xi}^2)^{1/2} U, W_0) = 
((1-\epsilon^2 \partial_{\xi}^2)^{1/2} U, (1-\epsilon^2 \partial_{\xi}^2)^{1/2} \varphi') = (U, \varphi')_{\epsilon}.
$$
One can select the eigenfunction uniquely by requiring that $(U, \varphi')_{\epsilon} = 0$ after adding a multiple of $\varphi'$ to $U$. 
With this convention, $\mathcal{L}_+$ in (\ref{tech-comp}) can be replaced by $\mathcal{T}_+$ so that equation (\ref{tech-comp}) can be solved in the form
\begin{equation*}
(1-\epsilon^2 \partial_{\xi}^2)^{1/2} U = -\lambda_0 \mathcal{T}_+^{-1} (1-\epsilon^2 \partial_{\xi}^2)^{1/2} V.
\end{equation*}
Substituting this formula into the first equation of the system (\ref{lin-stab}) yields
\begin{equation*}
L_- V = -\lambda_0^2 (1-\epsilon^2 \partial_{\xi}^2)^{1/2} \mathcal{T}_+^{-1} (1-\epsilon^2 \partial_{\xi}^2)^{1/2} V.
\end{equation*}
Since this eigenvalue problem is self-adjoint and  $(1-\epsilon^2 \partial_{\xi}^2)^{1/2} \mathcal{T}_+^{-1} (1-\epsilon^2 \partial_{\xi}^2)^{1/2} : H^1(\R) \to H^{-1}(\R)$ is a strictly positive operator by Lemma \ref{lem-L-plus}, we obtain $\lambda_0^2 \in \mathbb{R}$, i.e. $\lambda_0 \in \R\cup i \R$. Therefore, $\lambda_0$ is an eigenvalue 
in $\mathbb{C} \backslash i \R$ if and only if $-\lambda_0^2 < 0$. 
Consequently, such an eigenvalue exists if and only if 
\begin{equation}
\label{Rayleigh}
\inf_{{\tiny \begin{array}{l} V \in H^1_{\epsilon}(\R) \backslash \{0\} \\ (\varphi',V)_{\epsilon} = 0 \end{array}}} 
	\frac{\langle L_- V, V \rangle}{\langle (1-\epsilon^2 \partial_{\xi}^2)^{1/2} \mathcal{T}_+^{-1} (1-\epsilon^2 \partial_{\xi}^2)^{1/2} V, V \rangle} < 0.
\end{equation} 
Since the denominator is strictly positive by Lemma \ref{lem-L-plus}, 
the sign of the left-hand side of (\ref{Rayleigh}) is determined by the sign of $-\mu_0^2$ in a simpler variational problem
	\begin{equation}
\label{Rayleigh-revised}
-\mu_0^2 = \inf_{{\tiny \begin{array}{l} V \in H^1_{\epsilon}(\R) \backslash \{0\} \\ (\varphi',V)_{\epsilon} = 0 \end{array}}} 
\frac{\langle L_- V, V \rangle}{\| V \|^2}.
\end{equation}
As is shown in \cite{GalloMenza}, the sign of $-\mu_0^2$ in (\ref{Rayleigh-revised}) depends on the sign of 
\begin{align*}
\lim_{\lambda \to 0^-} ((L_- - \lambda I)^{-1} (1-\epsilon^2 \partial_{\xi}^2) \varphi', (1-\epsilon^2 \partial_{\xi}^2) \varphi') 
& = 
\lim_{\lambda \to 0^-} ((L_- - \lambda I)^{-1} L_- V_{\varphi}, (1-\epsilon^2 \partial_{\xi}^2) \varphi') \\
& = (V_{\varphi},\varphi')_{\epsilon},
\end{align*}
which changes the sign if $\epsilon = \epsilon_0$. The following dichotomy exists 
(see \cite[Theorem 1.1]{GalloMenza}):
	\begin{itemize}
		\item[(i)] $-\mu_0^2 \geq 0$ if $(\varphi',V_{\varphi})_{\epsilon} \leq 0$,
		\item[(ii)] $-\mu_0^2 < 0$ if $(\varphi',V_{\varphi})_{\epsilon} > 0$.
	\end{itemize}
In case (i), the condition (\ref{Rayleigh}) is not satisfied so that no isolated eigenvalue $\lambda_0$ with ${\rm Re}(\lambda_0) > 0$ of the spectral problem (\ref{lin-stab}) exists. This is the spectral stability case which corresponds to $\epsilon \in (0,\epsilon_0]$. In case (ii), the condition (\ref{Rayleigh}) is satisfied 
so that there exists an isolated positive eigenvalue $\lambda_0 \in \R$. 
This is the spectral instability case which corresponds to $\epsilon \in (\epsilon_0,\infty)$.
\end{proof1}

\section{Numerical illustrations}
\label{sec-4}

We approximate solutions of the NLS equation (\ref{nls}) numerically by using a finite-difference method. The line $\mathbb{R}$ is truncated on the symmetric interval $[-L,L]$ for sufficiently large $L > 0$ subject to the Neumann boundary conditions $u_{\xi}(t,-L) = u_{\xi}(t,L) = 0$ for every $t > 0$.
One can also use the inhomogeneous Dirichlet conditions $u(t,\pm L) = \pm 1$ 
as an alternative truncation, which we did not explore in our numerical computations.

Representing the solution $u(t,\xi)$ as a column vector ${\bf u}(t)$ on an equally spaced grid of $2K+1$ grid points on the interval $[-L,L]$ yields the evolutionary problem in the form 
\begin{equation}
\label{nls-discrete}
i (1 - \epsilon^2 A_N) \frac{d {\bf u}}{dt} + A_N {\bf u} + 2 (1 - |{\bf u}|^2) {\bf u} = 0, 
\end{equation}
where ${\bf u}(t) : \mathbb{R} \to \mathbb{C}^{2K+1}$ and $A_N \in \mathbb{M}^{(2K+1) \times (2K+1)}$ is the matrix approximation of the central difference for the second spatial derivative which incorporates the Neumann boundary conditions at the end points,
\begin{align*}
(A_N {\bf u})_k &= \frac{u_{k+1} - 2 u_k + u_{k-1}}{h^2}, \qquad \qquad 2 \leq k \leq 2K, 
\end{align*}
and
\begin{align*}
(A_N {\bf u})_1 &= \frac{2(u_{2} - u_1)}{h^2}, \quad 
(A_N {\bf u})_{2K+1} = \frac{2(u_{2K} - u_{2K+1})}{h^2}.
\end{align*}
As is well-known, the discretization error of the central difference has the order of $\mathcal{O}(h^2)$. 

Iterations in time are performed with the Crank--Nicholson method over an equally spaced temporal grid with the time step $\tau$. The numerical approximation of ${\bf u}(t)$ at the time level $t_m = m \tau$ is denoted by ${\bf u}^{(m)}$. 
The Crank--Nicolson method is given by the iterative rule: 
\begin{align}
\label{nls-crank}
& \left(1 - \epsilon^2 A_N - \frac{i \tau}{2} A_N - i \tau (1-|{\bf u}^{(m+1)}|^2) \right) {\bf u}^{(m+1)} \notag \\
& =  \left(1 - \epsilon^2 A_N + \frac{i \tau}{2} A_N + i \tau (1-|{\bf u}^{(m)}|^2) \right) {\bf u}^{(m)}, 
\end{align}
for integer $m \geq 0$. As is also well-known, the discretization error of the Crank--Nicolson method has the global error of $\mathcal{O}(\tau^2)$ and the stability of iterations is unconditional with respect to the time step $\tau$ relative to $h$.

\begin{figure}[htb!]
	\includegraphics[width=8cm,height=6cm]{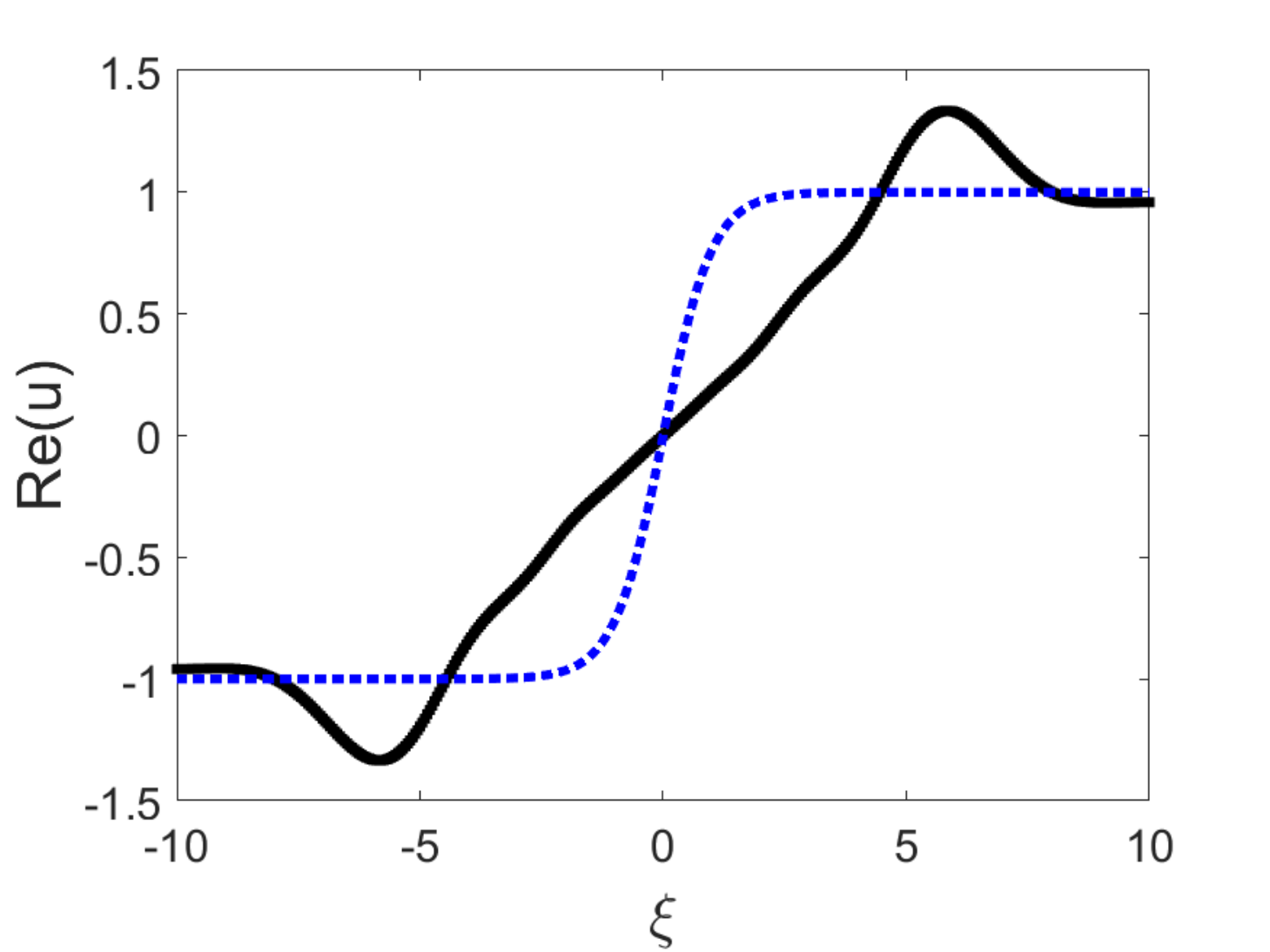}
	\includegraphics[width=8cm,height=6cm]{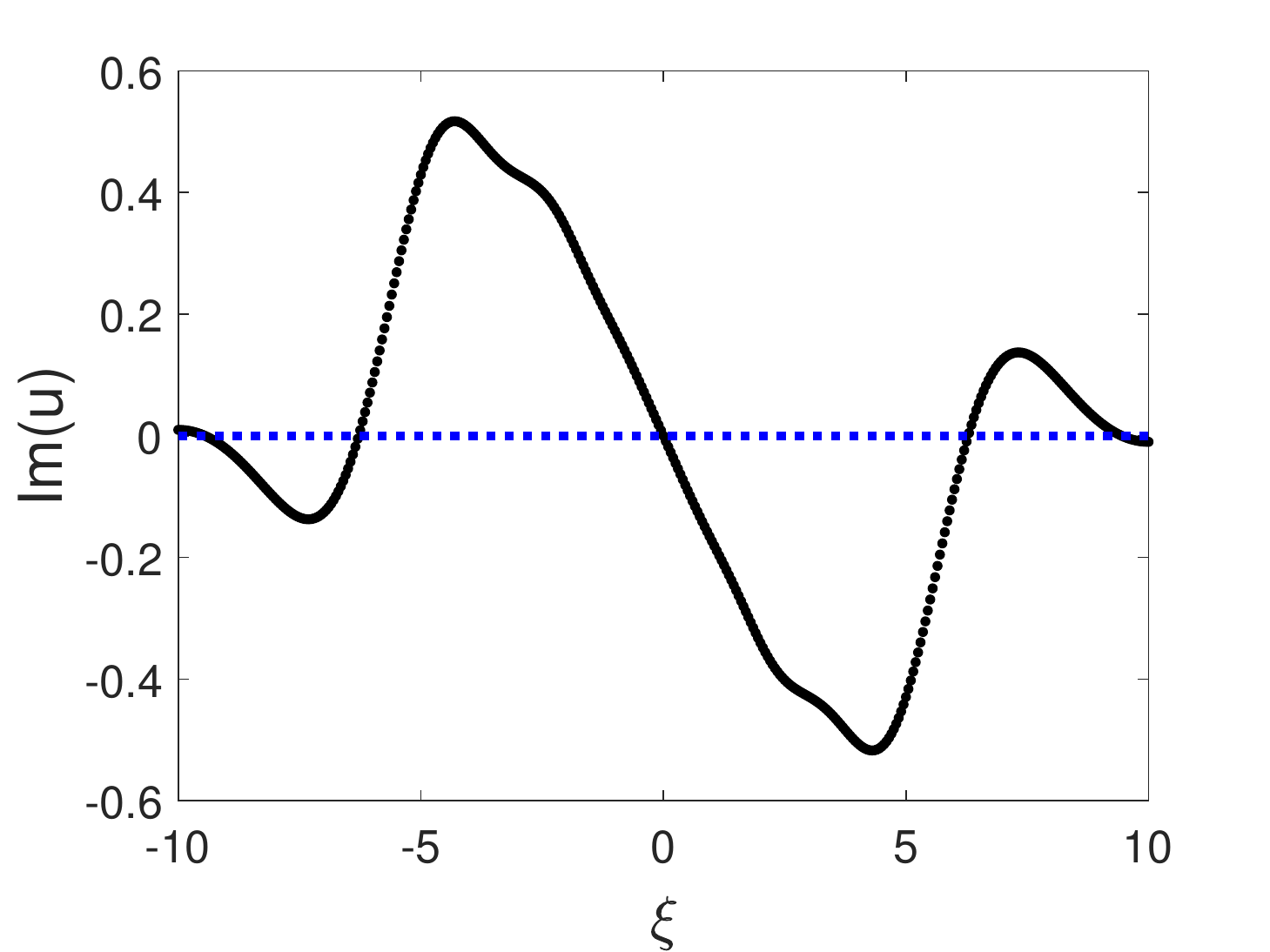}
	\caption{Real (left) and imaginary (right) parts of the solution 
		at time $t = 0$ (blue dots) and time $t = 5$ (black line).}
	\label{fig-Simulation}
\end{figure}

Before reporting outcomes of our numerical computations, we would like to confirm the order and unconditional stability of the numerical method by comparing computations with exact solutions. To do so, we consider the linear Schr\"{o}dinger evolution according to the iterations of the linear system:
\begin{equation}
\label{nls-crank-lin}
\left(1 - \epsilon^2 A_N - \frac{i \tau}{2} A_N \right) {\bf u}^{(m+1)} =  \left(1 - \epsilon^2 A_N + \frac{i \tau}{2} A_N \right) {\bf u}^{(m)}.
\end{equation}
The iterative scheme is implicit but invertion of the matrix on the left side is independent of the time step $m$, so that the iterative scheme is implemented as the explicit step:
\begin{equation*}
{\bf u}^{(m+1)} =  \left(1 - \epsilon^2 A_N - \frac{i \tau}{2} A_N \right)^{-1} \left(1 - \epsilon^2 A_N + \frac{i \tau}{2} A_N \right) {\bf u}^{(m)}.
\end{equation*}
The initial condition was ${\bf u}^{(0)} = \tanh(\xi_k)$, where $\xi_k = (k-1-K) h$ with $h = L/K$. We have used $L = 10$ and $K = 200$ for computations. The outcomes of the numerical simulations is shown for $\epsilon = 0.5$ on Figure \ref{fig-Simulation}. The real (left) and imaginary (right) parts of the numerical approximation of the solution ${\bf u}$ are shown for the initial data $t = 0$ (blue dots) and for the final time $t = 5$ (black lines). The simulations show 
that the black soliton deteriorates in the time evolution because of the linear dispersion.

The actual dynamics of the linear system (\ref{nls-crank-lin}) is not  important for our study. It is used here to estimate the computational error since the initial-boundary value problem for the linear Schr\"{o}dinger equation 
\begin{equation}
\label{lin-Schr}
\left\{ \begin{array}{lll} i (1 - \epsilon^2 \partial_{\xi}^2) u_t + u_{\xi \xi} = 0, \quad & x \in (-L,L), \quad & t > 0, \\
u_{\xi}(t,-L) = 0, \;\; u_{\xi}(t,L) = 0,  \quad & & t > 0, \\
u(0,\xi) = \tanh(\xi), \quad & x \in [-L,L], \quad &
\end{array} \right.
\end{equation}
can be solved with the separation of variables and Fourier series. The exact solution is given by the Fourier cosine series:
\begin{equation}
\label{Fourier}
u(t,\xi) = \frac{1}{2} a_0 + \sum_{n=1}^{\infty} a_n \exp\left(-\frac{i k_n^2 t}{1 + \epsilon^2 k_n^2}\right) \cos(k_n \xi + k_n L),
\end{equation}
where $k_n := \frac{\pi n}{2L}$, $n \in \mathbb{N}$ and the Fourier coefficients are computed from the initial data as
\begin{align*}
a_n = \frac{1}{L} \int_{-L}^L \tanh(\xi) \cos(k_n \xi + k_n L) d\xi.
\end{align*}

By using the Fourier interpolation with $2K+1$ grid points based on the Fourier cosine series (\ref{Fourier}), we obtain another numerical approxiation of the solution $u = u(t,\xi)$, whose truncation error is exponentially small with respect to the spacing $h$ between the grid points. Hence, the maximal distance between the two numerical solutions at time $t$ can be considered as the error function of the finite-difference approximation. Figure \ref{fig-Error} shows the error function versus time for computations with 
two step sizes $h = L/K$ for $K = 200$ and $K = 400$, whereas the time step is adjusted to be  $\tau = h$. The ratio of the two errors is always close to $4.0$ which confirms the second-order accuracy of the finite-difference method.

\begin{figure}[htb!]
	\includegraphics[width=14cm,height=8cm]{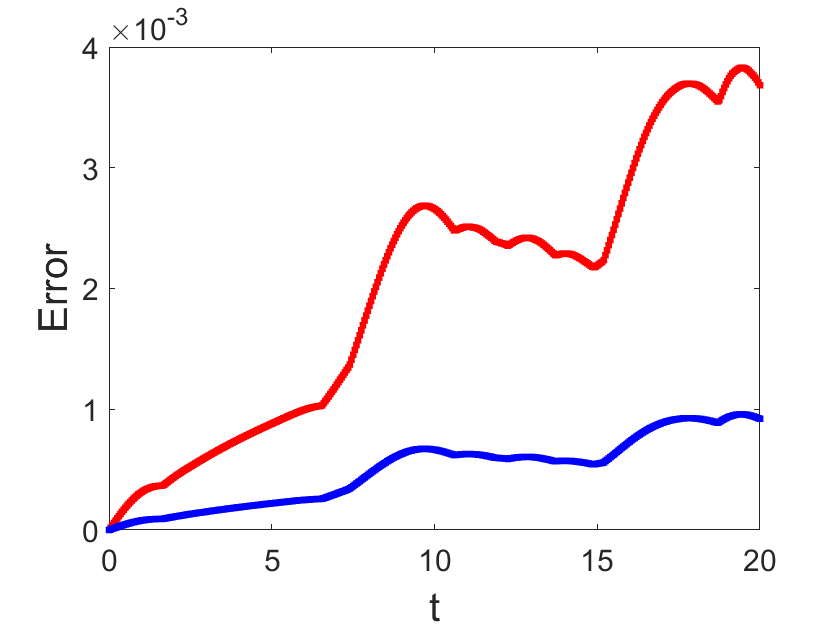}
	\caption{The error function versus time for computations with 
		$h = L/K$ and $\tau = h$ for $K = 200$ (red line) and $K = 400$ (blue line).}
	\label{fig-Error}
\end{figure} 

\begin{figure}[htb!]
	\includegraphics[width=8cm,height=6cm]{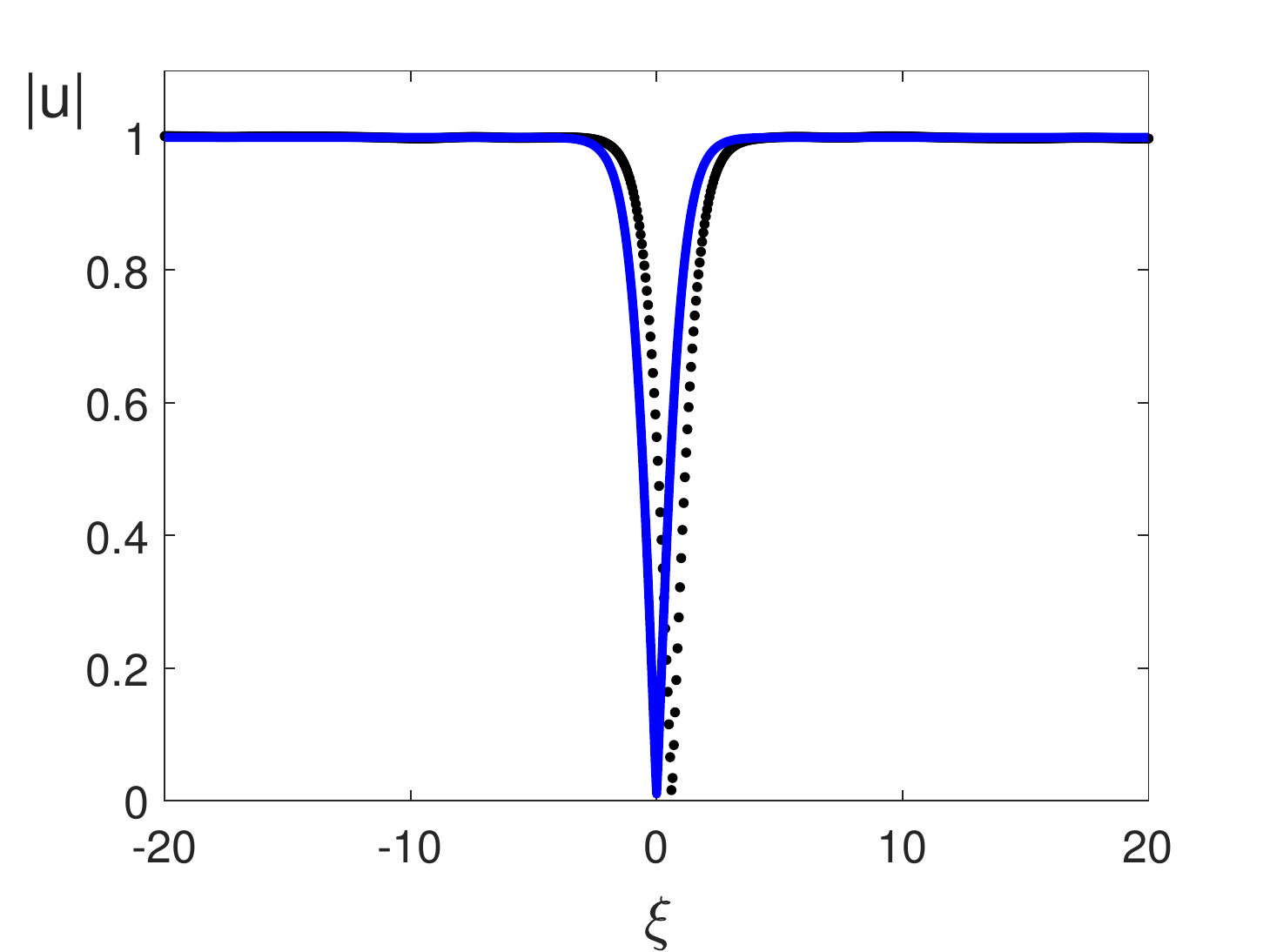}
	\includegraphics[width=8cm,height=6cm]{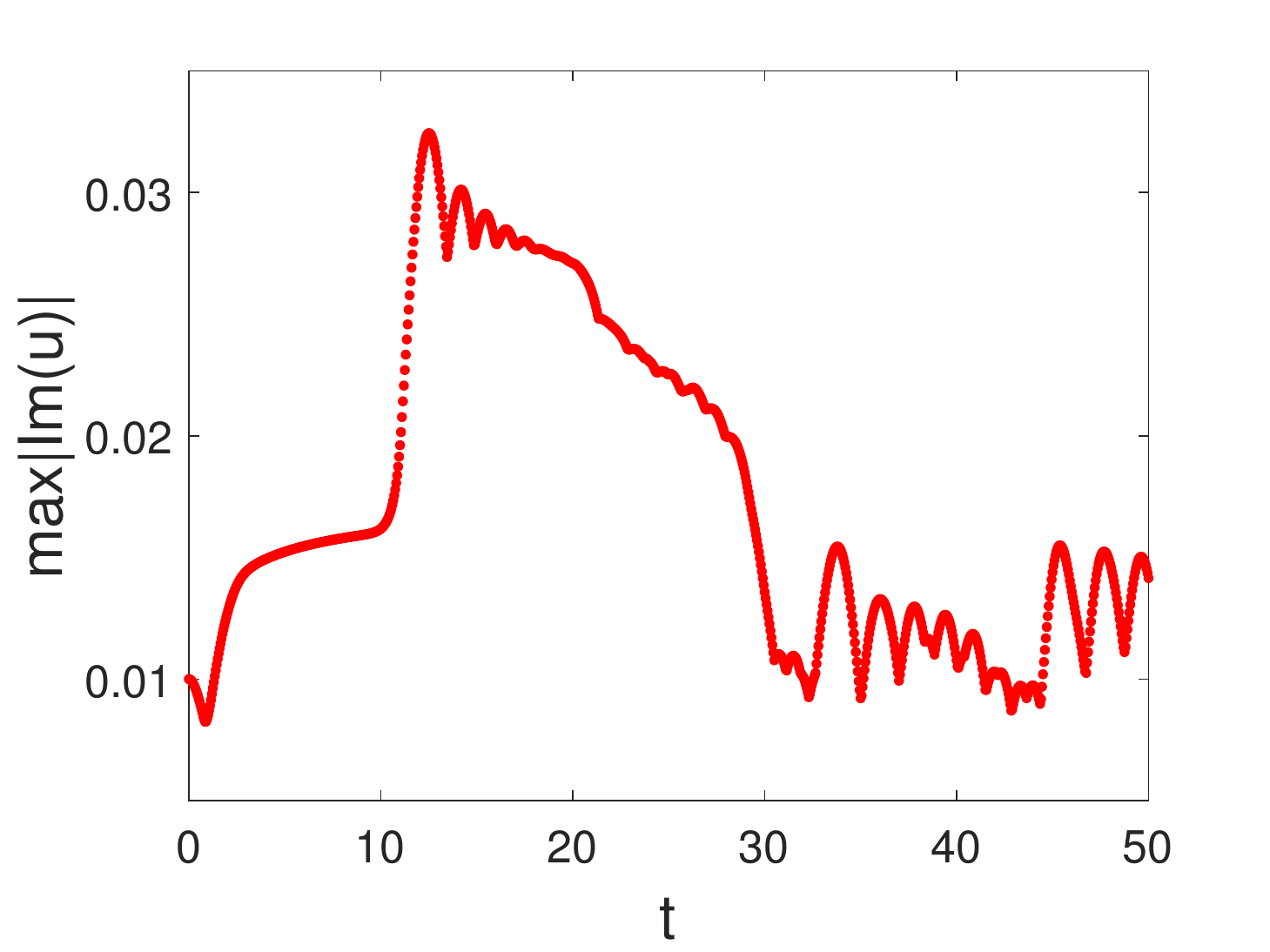}\\
	\includegraphics[width=14cm,height=8cm]{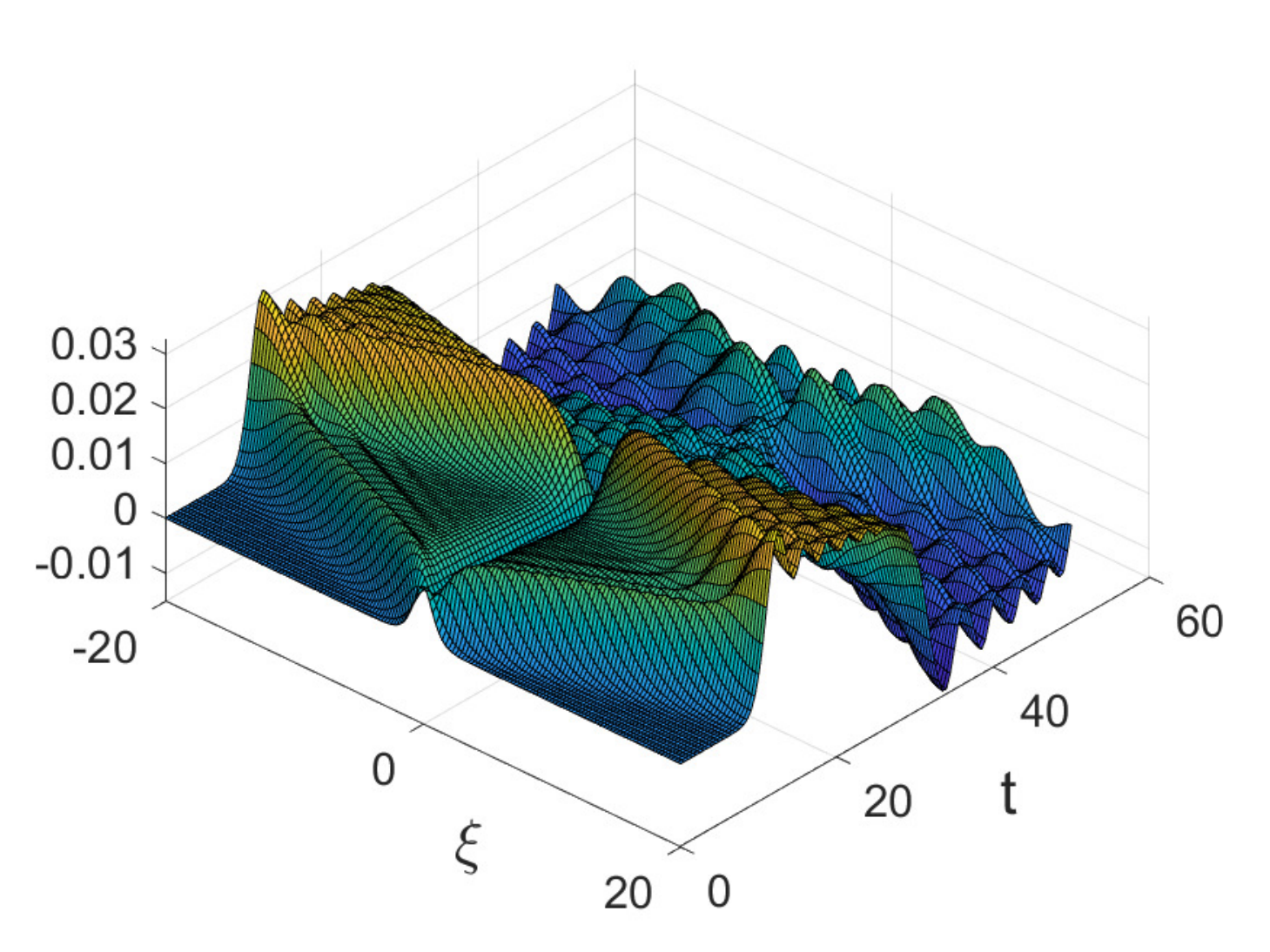}
	\caption{Numerical approximation of the dynamics of the perturbed black soliton for $\epsilon = 0.5$. Top left: the profile of $|u|$ versus $\xi$ for time $t = 0$ (blue line) and time $t = 50$ (black dots). Top right: the maximal value of $|{\rm Im}(u)|$ versus $t$. Bottom: the solution surface for ${\rm Im}(u)$ on the $(t,\xi)$ plane. }
	\label{fig-05}
\end{figure} 

Having tested the finite-difference method on the linear system (\ref{nls-crank-lin}), we can now perform iterations of the nonlinear system 
(\ref{nls-crank}). The iterative scheme is fully implicit. In order to make it 
explicit, we invert the matrix in the left side at each iteration $m$ 
by using the previous value of $|{\bf u}^{(m)}|^2$,
\begin{equation*}
\tilde{\bf u}^{(m+1)} = \left(1 - \epsilon^2 A_N - \frac{i \tau}{2} A_N - i \tau (1-|{\bf u}^{(m)}|^2) \right)^{-1} \left(1 - \epsilon^2 A_N + \frac{i \tau}{2} A_N + i \tau (1-|{\bf u}^{(m)}|^2) \right) {\bf u}^{(m)}, 
\end{equation*}
and use Heun's predictor--corrector method for ${\bf u}^{(m+1)}$ to restore the second-order 
accuracy of the time iterations. The initial data was chosen as 
\begin{align*}
{\bf u}^{(0)} = \tanh(\xi_k) + i a {\rm sech}^2(\xi_k), 
\end{align*}
where $a > 0$ is the amplitude factor for the perturbation to the black soliton. The perturbation is needed to observe the stable versus unstable dynamics of the perturbed black soliton in the time evolution. We have chosen $L = 20$, $K = 400$, and $a = 0.01$.

Figure \ref{fig-05} shows the outcomes of the numerical simulations 
for $\epsilon = 0.5$. According to Theorem \ref{thm-spectral}, the black 
soliton is spectrally stable for this value of $\epsilon$ since $\epsilon_0 = (5/8)^{1/4} \approx 0.89$. Indeed, we observe that the initial perturbation pushes the black soliton to the right for a small distance (top left panel) but the fluctuation of the imaginary part of the solution are bounded in the time evolution (top right panel). The solution surface for the imaginary part (bottom panel) shows that the perturbations to the black soliton are pushed towards the boundaries during the time evolution where they are reflected due to the Neumann boundary conditions. The perturbed black soliton preserves its shape in the case $\epsilon = 0.5$.

In comparison with the stable dynamics of the perturbed black soliton 
for $\epsilon = 0.5 < \epsilon_0$, Figure \ref{fig-1} shows the unstable dynamics for $\epsilon = 1 > \epsilon_0$. The perturbations to the black soliton in ${\rm Im}(u)$ grow from the initial value of $a = 0.01$ towards the unit magnitude. As a result, the black soliton 
is completely destroyed in the time evolution and the final profile of $|u|$ versus $\xi$ for $t = 10$ (black dots on the top left panel) shows non-solitonic solutions.

\begin{figure}[htb!]
	\includegraphics[width=8cm,height=6cm]{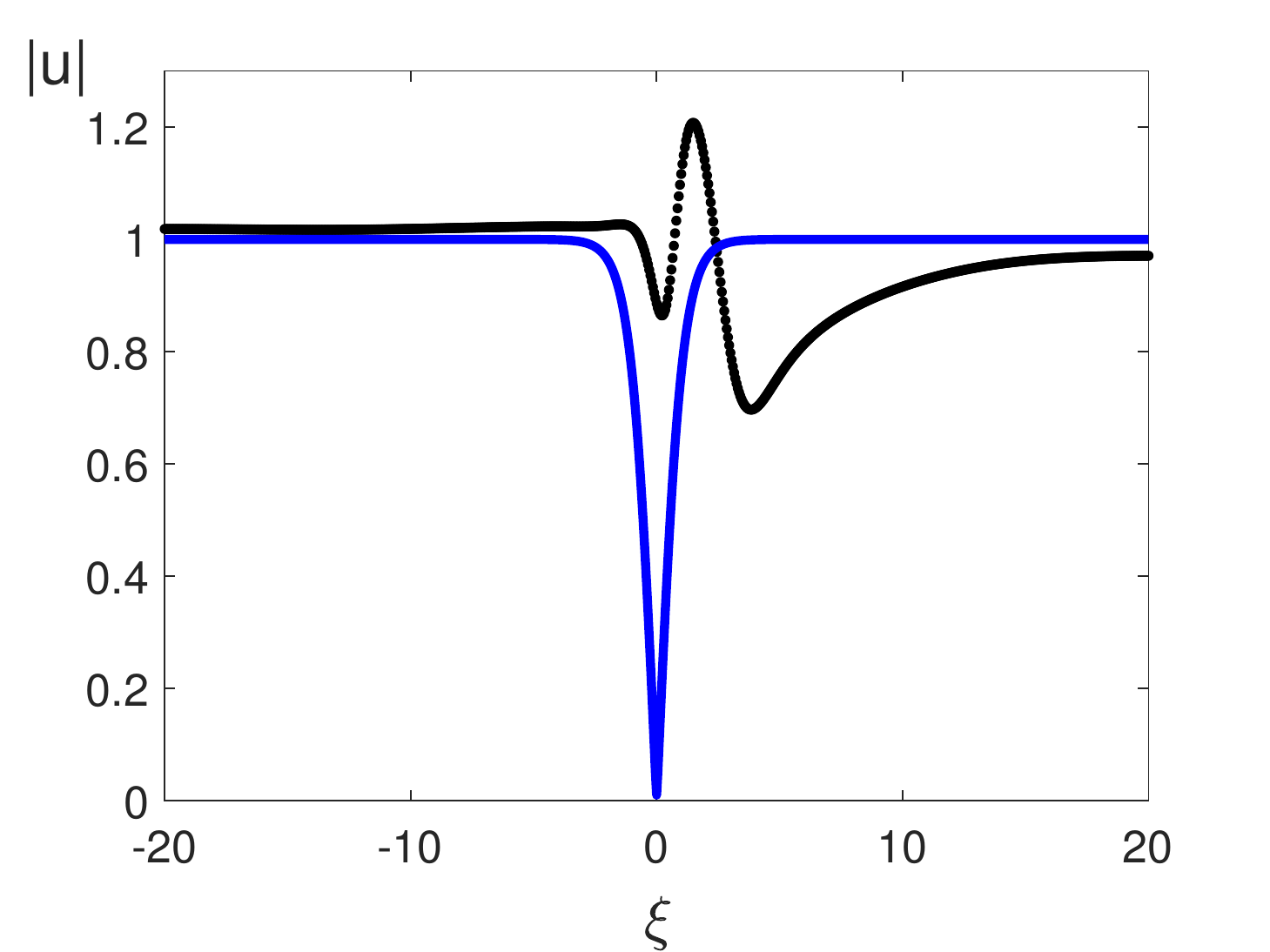}
	\includegraphics[width=8cm,height=6cm]{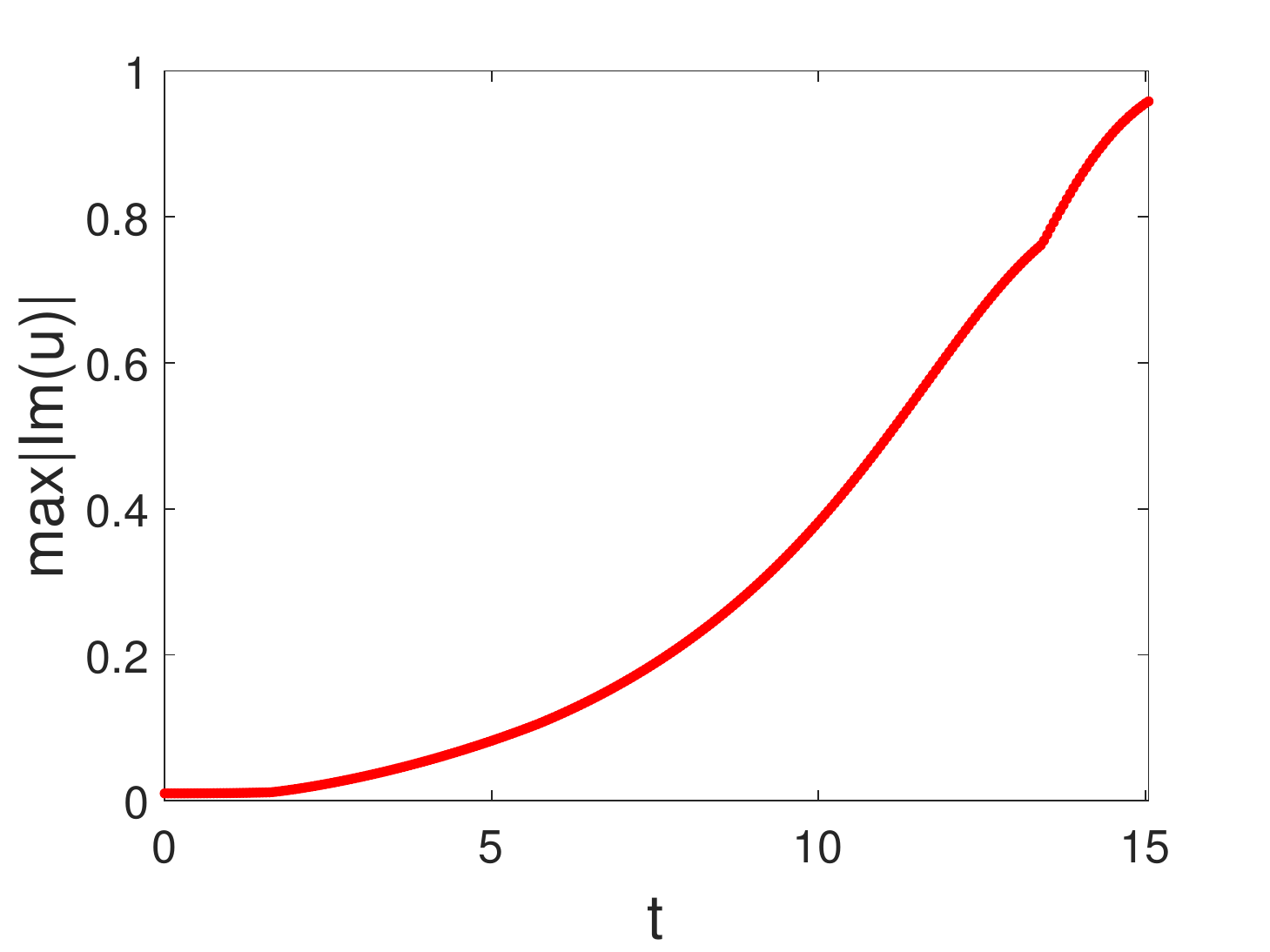}\\
	\includegraphics[width=14cm,height=8cm]{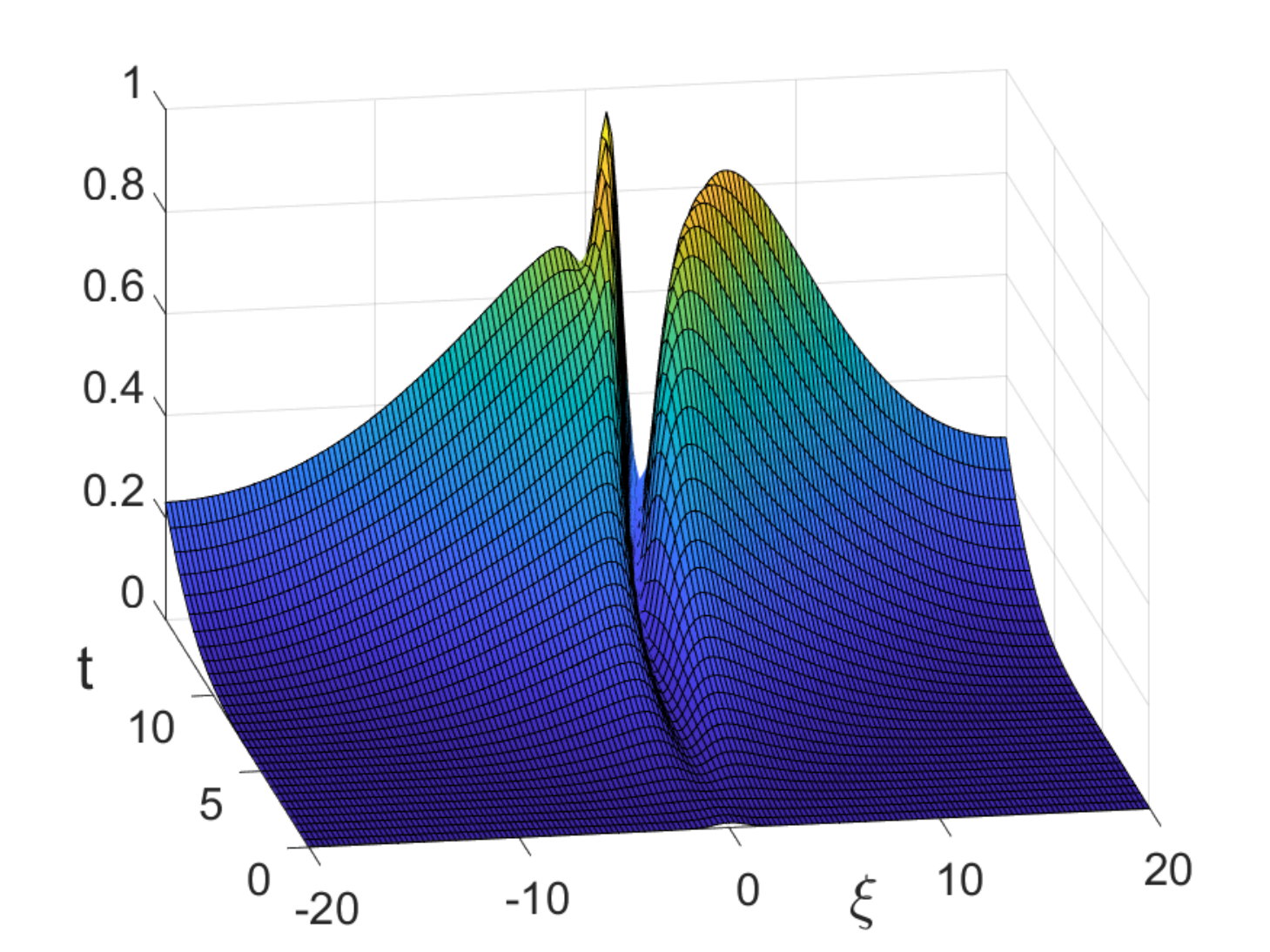}
	\caption{The same as on Figure \ref{fig-05} but for $\epsilon = 1$.}
	\label{fig-1}
\end{figure}

We have performed computations for $\epsilon$ closer to $\epsilon_0$ and observed the same stable and unstable dynamics of the perturbed black soliton similar to Figures \ref{fig-05} and \ref{fig-1}. 
The actual value of the instability threshold depends generally 
on the half-length $L$ of the truncated interval $[-L,L]$.

Finally, we can also inspect the time evolution of the perturbed black soliton 
in the canonical defocusing NLS equation (\ref{nls-canon}) which corresponds 
to the NLS model (\ref{nls}) with $\epsilon = 0$. Figure \ref{fig-0} show 
that the perturbed black soliton is stable in the time evolution  but the perturbations quickly become noisy in the time evolution due to multiple reflections from the boundaries. This dynamics agrees well with the property 
of the NLS equation (\ref{nls-canon}) that 
the imaginary part of the perturbation is not controlled in the Sobolev space 
of $H^1(\mathbb{R})$ since the energy and momentum are not coercive in $H^1(\mathbb{R})$. In comparison, perturbations of the black soliton in the regularized NLS equation (\ref{nls}) with $\epsilon > 0$ are well-defined in the space $H^1(\R)$ as a continuously differentiable function of time. Hence, the imaginary part of the perturbations on 
Figures \ref{fig-05} and \ref{fig-1} are much smoother and decaying in the spatial coordinate compared to the one on Figure \ref{fig-0}.

\begin{figure}[htb!]
	\includegraphics[width=8cm,height=6cm]{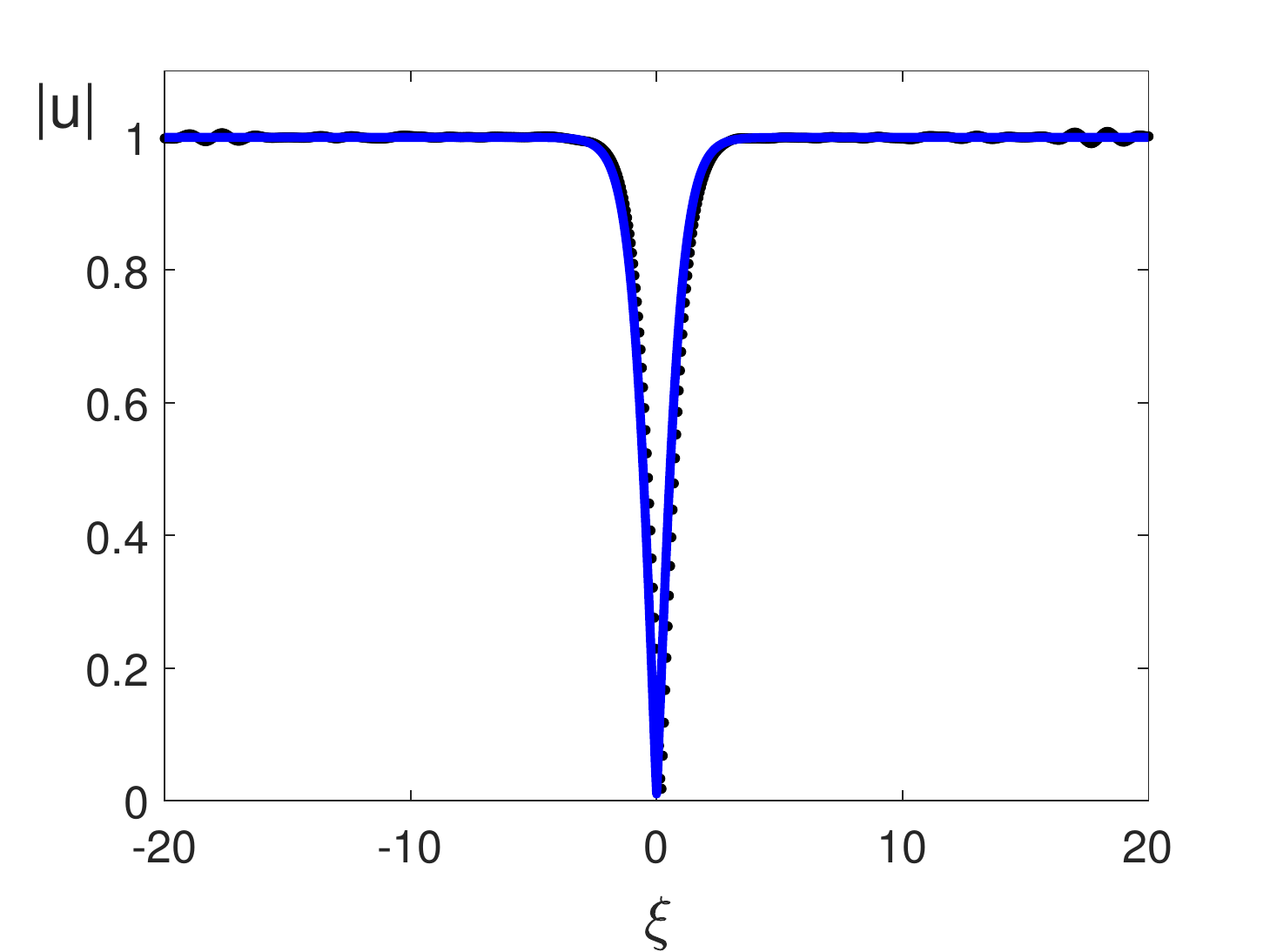}
	\includegraphics[width=8cm,height=6cm]{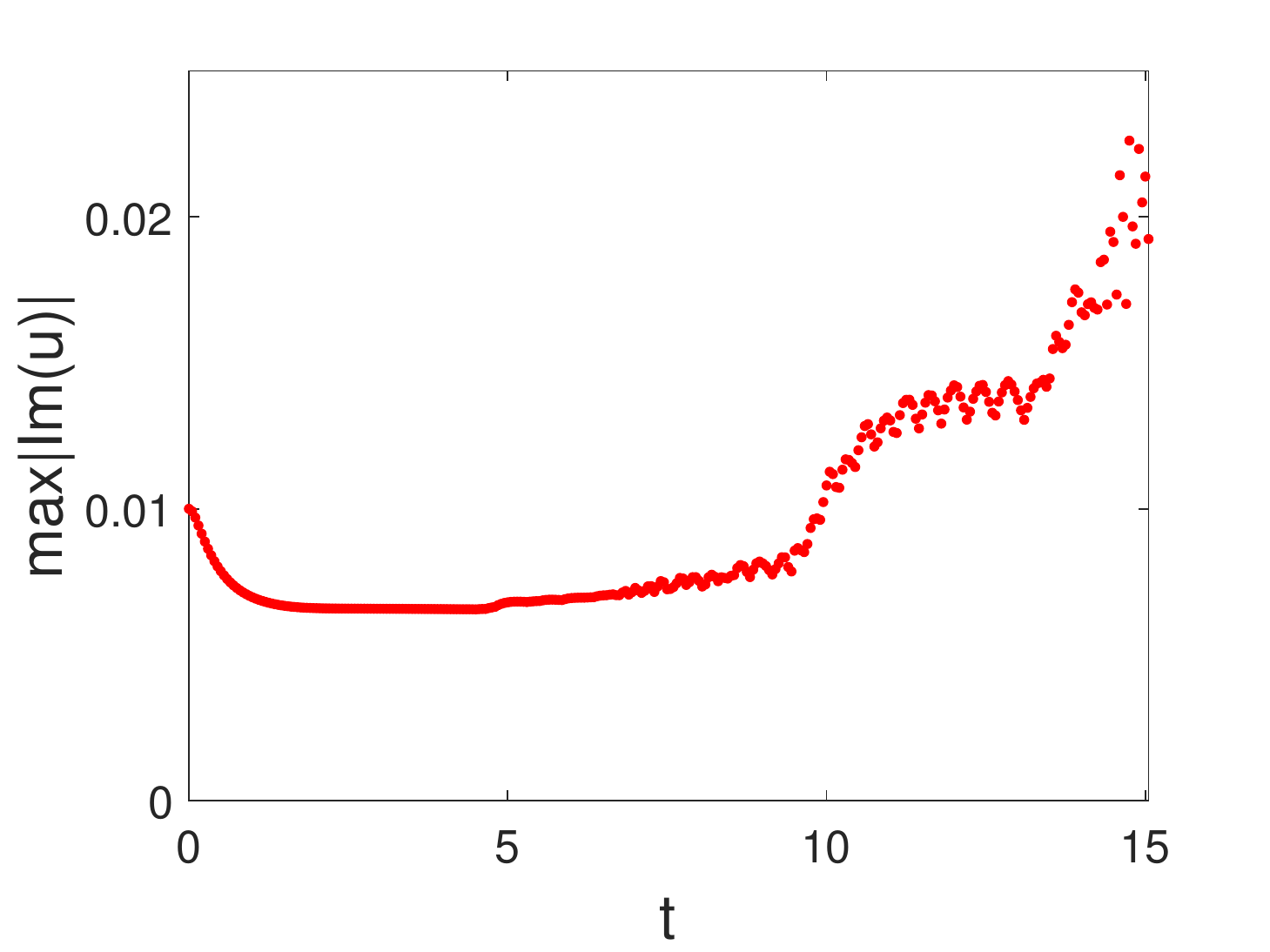}\\
	\includegraphics[width=14cm,height=8cm]{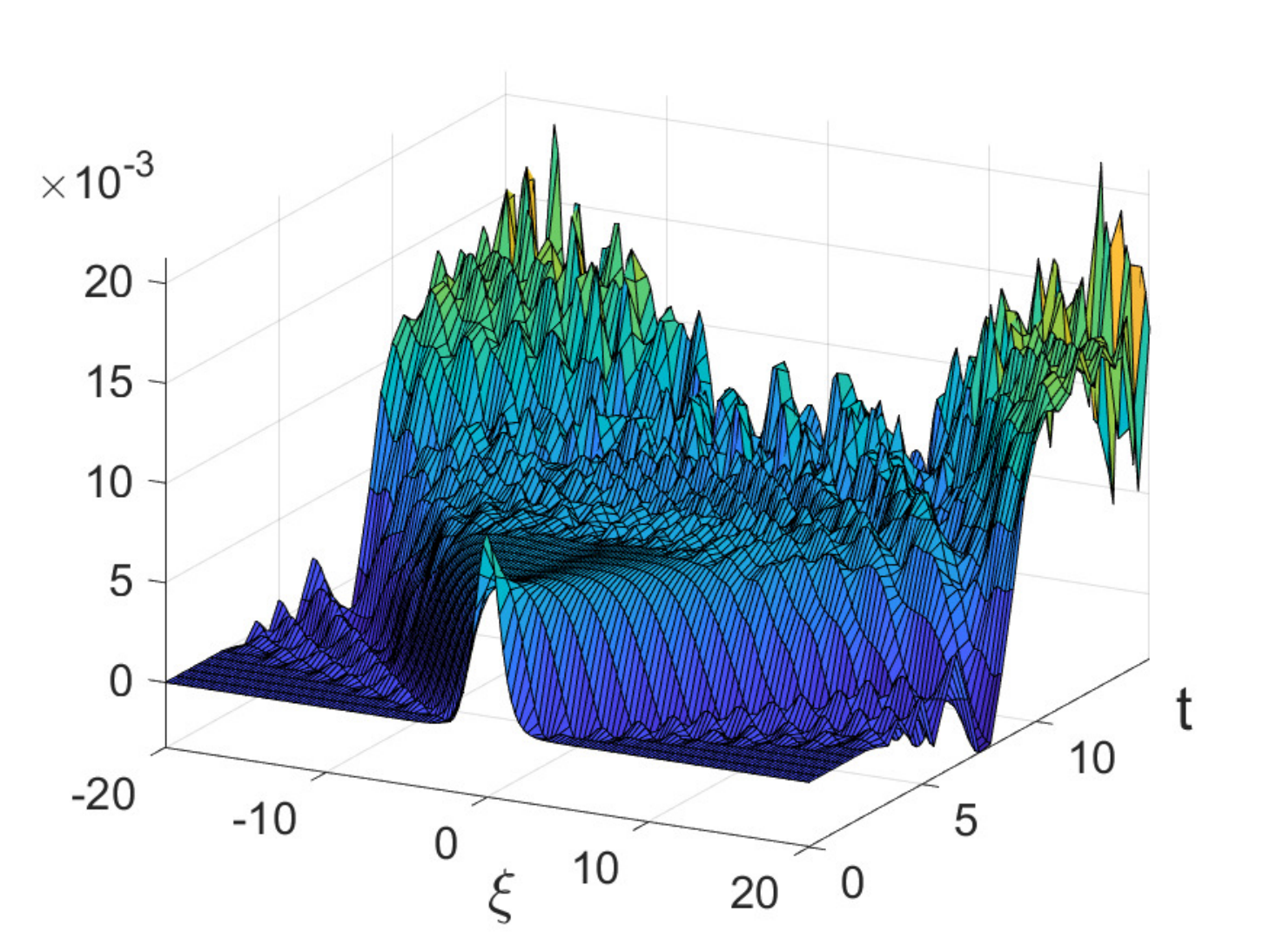}
	\caption{The same as on Figure \ref{fig-05} but for $\epsilon = 0$.}
	\label{fig-0}
\end{figure}

\section{Discussion}
\label{sec-5}

We have shown analytically and illustrated numerically that the regularized 
NLS equation (\ref{nls}) admits smooth time-dependent solutions near the black soliton with perturbations defined as $C^1([0,\tau_0), H^s(\R))$ with $s > \frac{1}{2}$. We have shown the spectral stability of the black soliton
for $\epsilon \leq \epsilon_0 := (5/8)^{1/4}$ and spectral instability for $\epsilon > \epsilon_0$. The question of orbital stability of the black soliton is left open since the energy conservation can be used to control a weighted $H^1(\R)$ norm of the perturbation, whereas the momentum conservation is only defined for perturbations in $H^s(\R)$ with $s \geq 2$.

Among further directions of the research, one can consider further generalizations of the regularized NLS equation with cubic nonlinearity in the form 
\begin{equation}
\label{nls-combined}
i (1 - \epsilon^2 \partial_x^2  - \delta^2 |u|^2) u_t + u_{xx} + 
2 (1-|u|^2) u = 0,
\end{equation}
with two parameters $\epsilon \geq 0$ and $\delta \in [0,1]$. The case $\epsilon = 0$ and $\delta = 1$ was considered in \cite{Plum} as the NLS model with intensity-dependent dispersion, 
\begin{equation}
\label{nls-idd}
i (1 - |u|^2) u_t + u_{xx} + 
2 (1-|u|^2) u = 0.
\end{equation}
We proved in \cite{Plum} that the perturbations to the black soliton are controlled in a weighted $H^1(\R)$ space from the energy and momentum conservation of the NLS model (\ref{nls-idd}). Spectral stability of the black soliton was also proven by characterizing the purely discrete spectrum of the spectral stability problem. The question of local well-posedness for the perturbations of the black soliton was left open since the black soliton satisfies the boundary conditions $|u(t,x)| \to 1$, where the evolution equation (\ref{nls-idd}) is singular. 

In the combined NLS model (\ref{nls-combined}) with sufficiently small $\epsilon > 0$ and $\delta \in (0,1)$, one can achieve both the local well-posedness and the spectral stability of the black soliton. Although the orbital stability problem might still be out of reach for $\epsilon > 0$ due to the mismatch between the function spaces for the energy and momentum conservation, the combined model (\ref{nls-combined}) can be used to perform study of the limiting transition $\delta \to 1$ to shed more light on the well-posedness theory for the NLS model (\ref{nls-idd}).

\vspace{0.25cm}

{\bf Acknowledgement.} The work of D. E. Pelinovsky is partially supported by AvHumboldt Foundation as Humboldt Reseach Award. The work of M. Plum is supported by Deutsche Forschungsgemeinschaft (German Research Foundation) - Project-ID 258734477 - SFB 1173.

\end{document}